\documentclass{amsart}

\usepackage{amsmath}
\usepackage{amssymb,amsfonts,amsthm}
\usepackage{cite}
\usepackage{graphicx}
\usepackage[colorlinks=true, allcolors=blue]{hyperref}
\usepackage[all]{xy}
\usepackage{color,xcolor}

\usepackage{mathrsfs}
\usepackage[all]{xy}
\usepackage{mathabx} 
\usepackage{mathtools}
\usepackage{mathbbol}

\usepackage{wasysym}
\usepackage{cancel,ulem}

\usepackage{mathtools}
\usepackage{mathbbol}
\usepackage{tikz}

\usepackage{ifthen}

\newtheorem{theorem}{Theorem}[section]
\newtheorem{lemma}[theorem]{Lemma}
\newtheorem{proposition}[theorem]{Proposition}
\newtheorem{corollary}[theorem]{Corollary}

\theoremstyle{definition}

\theoremstyle{remark}
\newtheorem{remark}[theorem]{Remark}

\newtheorem{fact}{Fact}

\def\Z{\mathbb Z}

\def\F{\mathbb F}
\def\N{\mathbb N}

\def\Q{\mathbb Q}
\def\G{\mathcal G}
\def\C{\mathcal C}
\def\z{\mathcal Z}
\def\E{\mathcal E}

\def\mcg{\mathrm{MCG}}

\def\sl{\mathrm{SL}}

\title{Genericity of hyperbolic 3-manifolds via Dehn surgery}

\author{Renxing Wan}
\address{School of Mathematical Sciences,  Key Laboratory of MEA (Ministry of Education) \& Shanghai Key Laboratory of PMMP,  East China Normal University, Shanghai 200241, China P. R.}
\email{rxwan@math.ecnu.edu.cn}

\author{Yanqing Zou}
\address{School of Mathematical Sciences,  Key Laboratory of MEA (Ministry of Education) \& Shanghai Key Laboratory of PMMP,  East China Normal University, Shanghai 200241, China P. R.}
\email{yqzou@math.ecnu.edu.cn}

\keywords{random 3-manifold, Dehn filling, braid group, homology 3-sphere, hyperbolic link, hyperbolic manifold}

\begin{document}

\begin{abstract}
     A significant result by Lickorish and Wallace shows that every closed, orientable 3-manifold can be obtained from a Dehn surgery on one link in 3-sphere. As links and Dehn surgeries vary vastly in the universe, a question arises: how can we describe their properties in vague?

     We introduce a counting model on links and Dehn surgeries, and prove that under this model, (1) a randon link is hyperbolic; (2) a random 3-manifold is hyperbolic. 
    
\end{abstract}
\maketitle

\section{Introduction}

In the recent years, there has been an increasing interest in using probabilistic methods in low-dimensional geometry and topology. A number of models for 3 manifolds and links appeared, and were used to study their topological properties and various invariants. 

As a pioneering work, in \cite{DT06}, Dunfield and Thurston introduced a random model of 3-manifolds by using random walks on the mapping class group of a surface, and a theory of random 3-manifolds has started. Actually they considered random Heegaard splittings by gluing a pair of handlebodies by the result of a random walk in the mapping class group. By utilizing this model, they showed that a generic closed 3-manifold is a $\Q$-homology sphere but not a $\Z$-homology sphere (see \cite[Corollary 8.5]{DT06}).

Later random Heegaard splittings are studied extensively by Maher in \cite{Mah10}. In particular, he showed that a 3-manifold obtained by a random Heegaard splitting is hyperbolic with asymptotic probability 1. More recently, Han, Yang and the second author \cite{HYZ25} introduced a new counting model for random 3-manifolds via counting orbits in proper actions of mapping class groups on Teichm\"{u}ller spaces and obtained the genericity of hyperbolic closed 3-manifolds. 

In contrary, by a classical result of Lickorish and Wallace, every closed orientable 3-manifold can be obtained from a Dehn surgery along a link. This motivates us to consider another random model for closed 3-manifolds, namely random Dehn surgeries performed on random links. In particular, we reformulate such random Dehn surgeries in terms of random group elements in a finite direct product of $\sl(2,\Z)$.

Since every link in $S^3$ is the closure of some braid \cite{Ale23},  the typical way to build random links is through random walks on the braid group\cite{Ma14}. 
Recently,  counting models on the braid group or mapping class groups are more attractive due to its close ties with dynamics of these two groups and 3-manifolds, see \cite{Cho25a,Cho25b,HYZ25}. Therefore, we study the following counting model in braid groups and generate a random link.

Let $G$ be a finitely generated group with a finite generating set $F$. For a positive number $R>0$, we denote $B_F(R)$ as the collection of group elements whose word norm with respect to $F$ is at most $R$. Then the \textit{counting measure} (considered in this paper) on $G$ is the following density function $$\delta: 2^G\to [0,1],\quad \delta(A):=\lim_{R\to \infty}\frac{\sharp(B_F(R)\cap A)}{\sharp B_F(R)}.$$ If $\delta(A)=1$ (resp. $\delta(A)=0$), then we say $A$ is \textit{generic} (resp. \textit{negligible}) in $G$. If $\delta(A)>0$, then we say $A$ has a \textit{positive density} in $G$.

With the above counting measure of the braid group and the mapping class group $\sl(2,\Z)$ of a torus, we  study the typical behavior of links and  the effect of random Dehn fillings on a link complement and thus the typical behavior of closed 3-manifolds. By utilizing these counting models, we obtain some classical generic results and give some new results in 3-manifolds.

\begin{theorem}
    By considering the counting measures on $B_n(n>3)$ and $\sl(2,\Z)$, we have
    \begin{enumerate}
        \item A generic braid in $B_n$ gives a hyperbolic link (see Theorem \ref{Thm: HypLink}).
        \item The set $\mathcal C_n\subset B_n$ consisting of braids whose closures are knots has a positive density in $B_n$. Moreover, a generic braid in $\C_n$ gives a hyperbolic knot (see Lemma \ref{Lem: PosDen} + Theorem \ref{Thm: HypKnot}).
        \item A generic closed orientable 3-manifold is a $\Q$-homology sphere but not a $\Z$-homology sphere (see Theorem \ref{Thm: GenQHomSph} + \ref{Thm: GenNonZHomSph}).
        \item A generic closed orientable 3-manifold is hyperbolic (see Theorem \ref{Thm: GenHyp3Mfd}).
    \end{enumerate}
\end{theorem}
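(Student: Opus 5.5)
Since this theorem collects four statements proved later in the paper, the plan is to prove each item separately. The common tool is counting theory for groups acting on hyperbolic-like spaces. The braid group $B_n$ acts on the curve complex of the $n$-punctured disk, which is Gromov hyperbolic, and this action is acylindrical. Results in the spirit of \cite{Cho25a,Cho25b} then say that, with respect to a finite generating set, a generic element of $B_n$ is pseudo-Anosov; moreover, its translation length grows linearly in the word length. For $\sl(2,\Z)$, which is virtually free, a generic element is hyperbolic (Anosov), and its entries grow exponentially.

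\textbf{Item (1).} A pseudo-Anosov braid $\beta$ has mapping torus $D_n\times_\beta S^1$ hyperbolic by Thurston. The closure $\hat\beta$, together with its braid axis $A$, has complement homeomorphic to this mapping torus. So $S^3\setminus(\hat\beta\cup A)$ is hyperbolic, and $S^3\setminus\hat\beta$ is obtained by the $\infty$ (meridional) filling on $A$. To show this filling is hyperbolic for generic $\beta$, I would use the 6-theorem (Agol, Lackenby): it suffices that the filling slope on the cusp of $A$ has length greater than $6$. The meridian of $A$ meets each fiber $n$ times, and its length grows with the translation length of $\beta$. So I would show that, generically, the power $\beta^k$ (or $\beta$ itself) has large translation length, forcing the cusp of $A$ to be long in the meridional direction. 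Alternatively, I would invoke known results that closures of pseudo-Anosov braids with large fractional Dehn twist coefficient or large curve-complex translation are hyperbolic (Ito, Kawamuro).

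\textbf{Item (2).} For positive density, the map $B_n\to S_n$ is a surjection onto a finite group. Counting measures equidistribute over cosets of finite-index subgroups; this needs care, and I would use the pullback of the count to $S_n$ via a Markov/automatic structure. The knot condition is that the permutation is an $n$-cycle, so $\mathcal C_n$ has density $(n-1)!/n!=1/n$. The genericity of hyperbolicity inside $\C_n$ then follows from (1), because a negligible set intersected with a positive-density set is negligible relative to that set.

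\textbf{Items (3) and (4).} Surgery coefficients on an $m$-component link are encoded by elements $(g_1,\dots,g_m)\in\sl(2,\Z)^m$, with slope $g_i(1/0)=p_i/q_i$. For (3), $H_1$ of the result is presented by the matrix with diagonal entries $p_i$ (in suitable framings) and off-diagonal entries the linking numbers. Its determinant is a nonzero polynomial in the $p_i,q_i$, so it vanishes only on a negligible set; this gives a $\Q$-homology sphere. Its absolute value is $\pm1$ only on a negligible set, since generically the $p_i$ are large and the determinant is dominated by the product $\prod p_i$. For (4), a generic link is hyperbolic by (1). A generic tuple has all slopes long, because entries grow, so the 6-theorem plus Thurston's hyperbolic Dehn surgery excludes only a finite set of slopes per cusp, and that set is negligible. \textbf{Main obstacle:} I expect the main obstacle to be the uniformity in item (1), namely controlling the cusp geometry of the axis through counting. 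I would first try to bound the meridian length from below using the translation length in the curve complex.
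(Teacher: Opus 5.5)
Items (1) and (4) follow the paper's route. The arguments you give for items (2) and (3), however, rest on claims that are false as stated.

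\textbf{Item (2).} Ball counts do not in general equidistribute over the cosets of a finite-index subgroup, so the value $1/n$ is unjustified. Take $F_2=\langle a,b\rangle$ with $a,b\mapsto 1\in\Z/2$: the kernel fills roughly $3/4$ of $B(R)$ for even $R$ and $1/4$ for odd $R$. The same parity effect occurs in $B_n$. With the Artin generators, word length is congruent to exponent sum mod $2$, so $\mathrm{sgn}\,\pi(f)=(-1)^{|f|_F}$. Hence $\C_n$ meets only the spheres of radius $\equiv n-1 \pmod 2$, and nothing forces the proportion to converge, let alone to $1/n$. A Markov or automatic structure does not remove this periodicity.

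What is true, and is all you need, is a positive lower bound, which the paper obtains from Lemma \ref{Lem: CW}. With $B=sec(S_n)$, every $g$ has $gb\in\C_n$ for some $b\in B$. Setting $K=\max_{b\in B}|b|_F$, this gives
$\sharp B_F(R)\le \sharp B\cdot\sharp(B_F(R+K)\cap\C_n)$ and $\sharp B_F(R+K)\le\sharp B_F(K)\,\sharp B_F(R)$.
Your second step is then correct: a negligible set stays negligible relative to a set of positive lower density. This is the paper's Lemma \ref{Lem: pA in subsets}.

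\textbf{Item (3).} Your linking-matrix setup is the right one, with one correction: row $i$ has $p_i$ on the diagonal and $q_i\,\mathrm{lk}(K_i,K_j)$ off it, not the bare linking numbers. The paper's Lemmas \ref{Lem: AnosovImpQHomSph} and \ref{Lem: p=1} compute $H_1$ componentwise, which is valid only when all linking numbers vanish. For example, on the Hopf link the Anosov matrix $\begin{pmatrix}2&1\\1&1\end{pmatrix}$ on both components gives slopes $1/1,1/1$ and $S^1\times S^2$.

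Neither of your two justifications works, though.
\begin{itemize}
\item ``Nonzero polynomial, hence negligible zero set'' is not an available principle for word-metric counting. Already $\{p=1\}$, the zero set of $p-1$, needs Lemma \ref{Lem: Neg}.
\item ``$\det$ is dominated by $\prod p_i$'' is false, because the $q_i$ are generically as large as the $p_i$. For two components with linking number $\ell$, $\det=p_1p_2-\ell^2q_1q_2$. If $\gcd(p_1,\ell)=1$, B\'ezout gives infinitely many $(p_2,q_2)$, with arbitrarily large entries, for which $\det=1$.
\end{itemize}

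What works is row-linearity: $\det=A\,p_k+B\,q_k$, where $A$ is the determinant for the sublink $K_1\cup\dots\cup K_{k-1}$, and $A,B$ depend only on $\phi_1,\dots,\phi_{k-1}$.
\begin{itemize}
\item If $\det=0$, then either $A=0$, which is negligible by induction on $k$, or $\xi(\phi_k)=-B/A$ is pinned. A fixed-slope set is $\pm U\phi_0$ with $U=\langle\begin{pmatrix}1&1\\0&1\end{pmatrix}\rangle$, and it meets $B_F(R)$ in $O(R)$ points uniformly in the slope.
\item If $|\det|=1$, the bottom row of $\phi_k$ lies on the line $Ap+Bq=\pm1$. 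You then need a bound on the number of such $\phi_k\in B_F(R)$ that is polynomial in $R$ and uniform in $(A,B)$, i.e.\ a uniform version of Lemma \ref{Lem: Neg}. This set is a translate of the double coset $\pm U\langle\begin{pmatrix}1&0\\1&1\end{pmatrix}\rangle$.
\end{itemize}

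\textbf{Items (1) and (4).} For (1) you follow the same route as the paper. The step you flag is the one the paper imports from Ma's formula (2.3) and \cite[Theorem 5.11]{HK08}, which uses normalized length in place of the 6-theorem. Correct one piece of the geometry: the meridian of $A$ meets each fiber once. The $n$ points are $\hat\beta\cap D$, and $\partial D$ is the longitude of $A$. So the meridian's length is at least $\mathrm{area}/\ell(\partial D)$ on the cusp torus, and the needed input is a lower bound on this cusp height in terms of curve-complex translation length, as in Futer--Schleimer-type estimates.

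For (4), your finite-exceptional-set argument matches Lemmas \ref{Lem: Neg2} and \ref{Lem: UniNeg}. But the exceptional slopes depend on the link, so you still have to say how the link count and the slope count are combined. Either use the paper's diagonal choice of radii $\alpha(n),\beta(n)$, or obtain uniformity from two facts: a universal bound on the number of exceptional slopes per cusp, and the uniform $O(R)$ count for each fixed-slope set.
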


{\bf Acknowledgment.} This work was partially supported by the National Key R \& D program of China (2025YFA1017500), NSFC 12131009 \& 12471065, and in part by Science and Technology Commission of Shanghai Municipality (No. 22DZ2229014).


\section{Preliminaries}
\subsection{Dehn filling and Dehn surgery}

Let $M$ be a 3-manifold with a torus boundary $T$. Fix a meridian $m$ and a longitude $l$ of $T$. A $(p,q)$-Dehn filling of $M$ along $T$ is the operation of gluing
a solid torus to $M$ by killing the curve $pm+ql$. Precisely speaking, let $M_0$ be a solid torus with the torus boundary $T_0$. Fix a meridian $m_0$ and a longitude $l_0$ of $T_0$. For each pair of coprime integers $(p,q)$, there exists a pair of coprime integers $(r,s)$ such that $rq-sp=1$. Then $M_0$ kills the curve $pm+ql$ means that the gluing map $\phi: T_0\to T$ is given by $\phi(m_0)=pm+ql,\quad  \phi(l_0)=rm+sl$. This gives a matrix representative $\begin{pmatrix}
    r & s\\ p & q
\end{pmatrix}$ of $\phi$ in $\sl(2,\Z)$. On the other hand, given a matrix $\phi=\begin{pmatrix}
    r & s\\ p & q
\end{pmatrix}\in \sl(2,\Z)$, it also gives a $(p,q)$-Dehn filling of $M$ along $T$ by taking $\phi$ as the gluing map.

Since a Dehn filling is essentially determined by the killed slope, a $(p,q)$-Dehn filling can be also stated as a $p/q$-Dehn filling.

Let $\xi: \sl(2,\Z)\to \Q\cup\{\infty\}$ be a map defined by $\xi(\begin{pmatrix}
    r & s\\ p & q
\end{pmatrix})=p/q$. Then each $\phi\in \sl(2,\Z)$ determines a $\xi(\phi)$-Dehn filling.

Let $L\subset S^3$ be a link with $k$ components. A Dehn surgery on $L$ is a Dehn filling of the complement of $L$. In other words, a Dehn surgery on $L$ is a $(p_1/q_1,\cdots,p_k/q_k)$-Dehn filling of $M=S^3\setminus N(L)$ along $\partial M$.


The following is a fundamental theorem in 3-dimensional topology. We refer the readers to \cite[Theorem 11.3.15]{Mar16} for its proof.
\begin{theorem}[Lickorish-Wallace]\label{Thm: LW}
    Every closed orientable 3-manifold can be described via an integral Dehn surgery along a link $L\subset S^3$.
\end{theorem}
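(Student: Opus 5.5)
The plan follows Lickorish's classical argument. It combines three ingredients: a Heegaard splitting of the manifold, the Lickorish twist theorem for the mapping class group of a closed orientable surface, and the observation that a Dehn twist along a curve can be realized by $\pm1$ surgery on a pushed-off copy of that curve.

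\emph{Step 1 (Heegaard splitting).} Let $N$ be a closed orientable 3-manifold. Choose a triangulation of $N$ (by Moise's theorem) and take a regular neighbourhood of its 1-skeleton; together with its complement this gives a Heegaard splitting $N=H_g\cup_f H_g'$ of some genus $g$. Fix a standard genus $g$ Heegaard splitting $S^3=H_g\cup_{\iota} H_g'$. The manifold $N$ is then obtained from $S^3$ by cutting along $\Sigma_g=\partial H_g$ and regluing by $h=\iota^{-1}\circ f$, a homeomorphism of $\Sigma_g$. If $h$ reverses orientation, compose it with an orientation-reversing homeomorphism of $\Sigma_g$ that extends over the handlebody $H_g$. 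Hence we may assume $h$ is orientation preserving. Only its isotopy class matters, so we regard $h$ as an element of $\mcg(\Sigma_g)$.

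\emph{Step 2 (twist factorization).} Invoke the Lickorish twist theorem: $\mcg(\Sigma_g)$ is generated by Dehn twists along finitely many (namely $3g-1$) simple closed curves. Write $h=\tau_{c_1}^{\epsilon_1}\cdots\tau_{c_k}^{\epsilon_k}$ with $\epsilon_i=\pm1$. Each step of the regluing can then be performed on a separate parallel copy $\Sigma_g\times\{t_i\}$, $0<t_1<\dots<t_k<1$, inside a collar $\Sigma_g\times[0,1]\subset S^3$.

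\emph{Step 3 (local surgery description of a twist).} This is the key step, and I expect it to be the main obstacle, mainly because of orientation and sign bookkeeping. Let $c\subset \Sigma\times\{t\}$ be a simple closed curve and let $A$ be an annular neighbourhood of $c$ in that level surface. Cutting $S^3$ along $A$ and regluing by $\tau_c^{\pm1}$ is the same as removing a solid torus neighbourhood $V$ of $c$ and gluing it back differently. The new gluing sends the meridian of $V$ to $\mu\pm\lambda$, where $\lambda$ is the framing of $c$ induced by $\Sigma\times\{t\}$. So this step is a surgery on $c$ with coefficient $\pm1$ relative to the surface framing, and the coefficient relative to the Seifert framing is $\pm1+\mathrm{lk}(c,c^+)$. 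That number is still an integer, so the surgery is integral in the sense of the preliminary section: its gluing matrix in $\sl(2,\Z)$ has $q=1$. I would verify this carefully by computing the image of the meridian of $V$ under the reglued map, using the explicit twist formula on $A\times[t-\varepsilon,t+\varepsilon]$.

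\emph{Step 4 (assembling the link).} The curves $c_i\times\{t_i\}$ lie on disjoint levels, so they are pairwise disjoint and form a link $L\subset S^3$. Performing the corresponding integral surgeries on all components at once realizes the composite regluing by $h$ and produces $N$. This proves Theorem~\ref{Thm: LW}.
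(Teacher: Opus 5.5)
Your proposal is correct and is essentially the standard argument behind the reference \cite[Theorem 11.3.15]{Mar16}, which the paper cites in place of a proof: Heegaard splitting, generation of $\mcg(\Sigma_g)$ by Dehn twists, and realizing each twist as a surgery with coefficient $\pm1$ relative to the surface framing (hence integral relative to the Seifert framing), with the twists placed at disjoint levels of a collar. The paper gives no argument of its own beyond that citation, so there is nothing further to compare.
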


\subsection{Genericity of Anosov matrices}
Let $G$ be a finitely generated group with a finite generating set $F$. We denote $B_F(R)$ as the collection of group elements whose word norm with respect to $F$ is at most $R$. 
\begin{theorem}\cite[Theorem 1.1]{GTT18}\label{Thm: GTT}
    Let $G$ be a hyperbolic group with a nonelementary action by isometries on a separable, hyperbolic metric space $X$. Then $X$-loxodromics are generic on $G$, i.e. $$\lim_{R\to \infty}\frac{\sharp\{g\in B_F(R): g \text{ is loxodromic on } X\}}{\sharp B_F(R)}= 1$$ for any finite generating set $F$ of $G$.
\end{theorem}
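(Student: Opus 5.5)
The plan follows the strategy of Gekhtman--Taylor--Tiozzo: translate counting in $B_F(R)$ into a statement about a finite-state Markov chain, and then use random-walk-type estimates in $X$. Fix a basepoint $o\in X$.

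\emph{Step 1 (reduction to spheres).} Since $G$ is hyperbolic, it has an exponential growth rate $\lambda>1$. By Coornaert's estimate, $\sharp S_F(n)\asymp \lambda^n$, where $S_F(n)$ is the sphere of radius $n$. Hence a fixed proportion of $B_F(R)$ lies in $S_F(n)$ for $n\in[R-C,R]$, and $B_F(R)$ is a weighted average of the spheres $S_F(n)$ with weights $\asymp\lambda^{n-R}$. It therefore suffices to prove that the proportion of loxodromic elements in $S_F(n)$ tends to $1$ as $n\to\infty$.

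\emph{Step 2 (Markov model).} By Cannon's theorem there is a finite directed graph $\Gamma$ whose paths from an initial vertex biject with geodesic normal forms in $G$, so $S_F(n)$ is in bijection with the paths of length $n$. Following Calegari--Fujiwara, the uniform measure on these paths is asymptotically a mixture of Markov measures supported on the maximal-growth components of $\Gamma$. Concretely, a typical path spends bounded time outside one maximal component and then runs in that component as a stationary Markov chain. The question thus becomes the behaviour, under the orbit map $g\mapsto go$, of sample paths $w_1w_2\cdots w_n$ of these chains.

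\emph{Step 3 (positive drift and loxodromicity).} For each maximal component I will show two things. First, $d(o,w_1\cdots w_n o)\ge \ell n$ with probability tending to $1$, for some $\ell>0$. Second, the Gromov product $(g\cdot g^{-1})_o$ is $o(n)$ with high probability, where $g=w_1\cdots w_n$. Both should come from the subadditive ergodic theorem together with random-walk techniques in the style of Maher--Tiozzo. For the Gromov product, the beginning and the end of a long Markov path are asymptotically independent (by mixing), so the forward and backward shadows of $o$ under $g$ are unlikely to overlap much. The standard estimate
$$\tau(g)\ \ge\ d(o,go)-2(g\cdot g^{-1})_o-O(\delta)$$
then gives translation length $\tau(g)\ge \ell n/2>0$, so $g$ is loxodromic with probability tending to $1$.

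\emph{Main obstacle.} The key difficulty is proving positive drift for each maximal component. This requires that the semigroup $\Gamma_v$ generated by the group elements read along loops at a vertex $v$ of a maximal component acts nonelementarily on $X$; for a genuine random walk this is assumed, but here it has to be derived. I would first try to show that $\Gamma_v$ is ``large'' in $G$: by a growth comparison, for some finite set $B\subset G$ one has $G=B\,\Gamma_v\,B$. Nonelementarity of $G$ then passes to $\Gamma_v$: if $\Gamma_v$ had a bounded orbit or fixed a point or pair of points of $\partial X$, so would a finite-index-like enlargement, which would contradict nonelementarity of the $G$-action. Once nonelementarity of $\Gamma_v$ is established, the Maher--Tiozzo argument, which runs through convergence to the boundary and non-atomic hitting measures, yields $\ell>0$ and completes the proof.
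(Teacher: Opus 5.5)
The paper does not prove this statement. It is quoted from Gekhtman--Taylor--Tiozzo \cite{GTT18} and used only as a black box. Your outline reproduces the architecture of their argument: Cannon's automaton, Calegari--Fujiwara Markov measures on maximal components, a largeness statement $G=B\Gamma_vB$ for the loop semigroup, Maher--Tiozzo applied to the first-return walk on $\Gamma_v$, and the bound $\tau(g)\ge d(o,go)-2(go\cdot g^{-1}o)_o-O(\delta)$. Steps 1--3 are fine at the level of an outline.

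\textbf{The gap: transferring nonelementarity to $\Gamma_v$.} This is the step you yourself call the crux, and it does not work as written. There is no ``finite-index-like enlargement'': $B\Gamma_vB$ is not a group, and a point of $\partial X$ fixed by $\Gamma_v$ need not be fixed by any $b\gamma b'$.

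\emph{What $G=B\Gamma_vB$ does give.} Directly, $Go\subseteq\bigcup_{b\in B}b\,N_D(\Gamma_v o)$ with $D=\max_{b\in B}d(o,bo)$. So $\Gamma_v$ has unbounded orbits and $\Lambda G\subseteq\bigcup_{b\in B}b\,\Lambda\Gamma_v$. This rules out bounded orbits and limit sets with at most two points. It does not rule out a focal semigroup, whose loxodromics all share one endpoint $\xi$ while its limit set is large. Maher--Tiozzo need two \emph{independent} loxodromics in $\Gamma_v$, so you need an argument that actually produces them.

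\emph{A ping-pong fix.}
\begin{itemize}
\item Take a loxodromic $g\in G$ and write $g^n=a_n\gamma_nc_n$ with $a_n,c_n\in B$ and $\gamma_n\in\Gamma_v$. Pass to a subsequence on which $a_n=a$ and $c_n=c$.
\item Then $\gamma_n=a^{-1}\bigl(g^n c^{-1}a^{-1}\bigr)a$. If $g^+\neq ac\,g^-$, then $\gamma_n$ is loxodromic for large $n$, with fixed points converging to $a^{-1}g^+$ and $c\,g^-$.
\item Apply this to two independent loxodromics of $G$ in general position with respect to the finite set $B$. This gives two independent loxodromics in $\Gamma_v$.
\end{itemize}

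\textbf{Two smaller points.}
\begin{itemize}
\item ``By a growth comparison'' for $G=B\Gamma_vB$ needs a genuine input. One option: geodesic words avoiding a fixed subword have growth rate strictly below $\lambda$, so every normal form labels a path inside the maximal component. Bounded connecting paths to and from $v$ then give $G=B\Gamma_vB$.
\item In the Gromov-product estimate, $g^{-1}o$ is governed by the reversed chain, whose loop semigroup is $\Gamma_v^{-1}$. You therefore need non-atomic hitting measures for both $\Gamma_v$ and $\Gamma_v^{-1}$.
\end{itemize}
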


It is well-known (cf. \cite[\textsection 13.1]{FM12}) that any $\phi\in \mcg(T^2)\cong \sl(2,\Z)$ can be classified into the following three types: periodic, reducible and Anosov. Moreover, a matrix representative of $\phi\in \sl(2,\Z)$ is periodic (or reducible, or Anosov) if and only if its trace has an absolute value $<2$ (or $=2$, or $>2$).

\begin{fact}
    \begin{enumerate}
        \item \cite[\textsection 4.4]{Loh17} $\sl(2,\Z)$ is virtually free, and thus word-hyperbolic.
        \item  There is a non-elementary isometric group action of $\sl(2,\Z)$ on the upper half plane $\mathbb H^2$ given by $$\begin{pmatrix}
    r & s\\ p & q
\end{pmatrix}\cdot z\mapsto \frac{r\cdot z+s}{p\cdot z+q}.$$ The set of $\mathbb H^2$-loxodromics coincides with the set of Anosov matrices.
    \end{enumerate}
\end{fact}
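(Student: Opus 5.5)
The Fact has two parts. Part (1) comes from an explicit finite-index free subgroup. Part (2) comes from the trace classification of Möbius transformations, together with an explicit pair of independent loxodromics.

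\emph{Part (1).} I would use the classical amalgam decomposition $\sl(2,\Z)\cong \Z/4*_{\Z/2}\Z/6$, generated by $S=\begin{pmatrix}0&-1\\1&0\end{pmatrix}$ and $ST$ with $T=\begin{pmatrix}1&1\\0&1\end{pmatrix}$. Equivalently, $\mathrm{PSL}(2,\Z)\cong \Z/2*\Z/3$. I would not need the amalgam in detail, only a finite-index free subgroup. The cleanest choice is the principal congruence subgroup $\Gamma(3)=\ker(\sl(2,\Z)\to \sl(2,\Z/3))$.
\begin{itemize}
\item It is torsion-free: a torsion element has order $3,4$ or $6$ and is conjugate to a power of $S$ or $ST$, and none of these is $\equiv I \pmod 3$.
\item It injects into $\mathrm{PSL}(2,\Z)$, since $-I\notin\Gamma(3)$.
\item By the Kurosh subgroup theorem, a torsion-free subgroup of $\Z/2*\Z/3$ is free.
\end{itemize}
Since $\Gamma(3)$ has finite index, $\sl(2,\Z)$ is virtually free. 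By the Švarc--Milnor lemma it is then quasi-isometric to a tree. Word-hyperbolicity is a quasi-isometry invariant, so $\sl(2,\Z)$ is word-hyperbolic.

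\emph{Part (2), the action.} For $\phi=\begin{pmatrix}r&s\\p&q\end{pmatrix}$ with $rq-sp=1$, I compute $\operatorname{Im}(\phi\cdot z)=\operatorname{Im} z/|pz+q|^2>0$, so $\phi$ preserves $\mathbb H^2$. It acts by a Möbius transformation with real coefficients, hence by an isometry of the hyperbolic metric. The map $\sl(2,\Z)\to\mathrm{Isom}(\mathbb H^2)$ is a homomorphism whose kernel is $\{\pm I\}$.

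\emph{Part (2), identifying loxodromics.} I would find the fixed points on $\partial\mathbb H^2=\mathbb R\cup\{\infty\}$.
\begin{itemize}
\item If $p\neq 0$, a fixed point solves $pz^2+(q-r)z-s=0$. Its discriminant is $(q-r)^2+4ps=(r+q)^2-4$. So there are two distinct real fixed points exactly when $|\operatorname{tr}\phi|>2$; this is the loxodromic case, translating along the geodesic joining them.
\item Still with $p\neq0$: if $|\operatorname{tr}\phi|=2$ there is one boundary fixed point, which is the parabolic case. If $|\operatorname{tr}\phi|<2$ there is a fixed point in $\mathbb H^2$ itself, which is the elliptic case.
\item If $p=0$, then $r=q=\pm1$, so $|\operatorname{tr}\phi|=2$. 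The map is $z\mapsto z\pm s$, which is parabolic, or the identity when $\phi=\pm I$.
\end{itemize}
So $\phi$ is loxodromic exactly when $|\operatorname{tr}\phi|>2$. By the trace criterion quoted from \cite{FM12}, this is exactly the Anosov case.

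\emph{Part (2), nonelementarity.} I would exhibit two loxodromics with disjoint fixed-point pairs. Take $A=\begin{pmatrix}2&1\\1&1\end{pmatrix}$, whose fixed points are $(1\pm\sqrt5)/2$, and its transpose $B=\begin{pmatrix}1&1\\1&2\end{pmatrix}$, whose fixed points are $(-1\pm\sqrt5)/2$. These four points are distinct. By a ping-pong argument, high powers of $A$ and $B$ generate a free group of loxodromics, so the limit set is not finite and the action is nonelementary.

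None of the steps is a real obstacle. The only point needing care is the degenerate case: matrices with $p=0$, and $\pm I$ acting trivially. The $p=0$ case is handled above by direct inspection, and $\pm I$ acting trivially does not affect the classification or the nonelementarity.
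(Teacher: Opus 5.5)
Your proof is correct. It necessarily differs from the paper, which gives no argument for this Fact: part (1) is cited to \cite[\S4.4]{Loh17}, and part (2) is treated as standard.

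\textbf{Part (1).} The free subgroup behind the citation is the one the paper records in Fact~\ref{Fac2}(2). This is the Sanov subgroup generated by $\begin{pmatrix}1&0\\2&1\end{pmatrix}$ and $\begin{pmatrix}1&2\\0&1\end{pmatrix}$, shown free of rank $2$ by ping-pong and of index $12$. You use $\Gamma(3)$ instead: torsion-freeness via conjugacy of torsion into the factors of the amalgam, then Kurosh in $\Z/2*\Z/3$. This is clean but leans on heavier structure theory, and it gives freeness without an explicit free basis. The ping-pong route is more elementary, and it produces a free basis made of words of length $2$ in the standard generators. The paper relies on exactly that basis later: for the word-norm comparison in Lemma~\ref{Lem: NormEstimate}, and for the growth bound $\sharp B_F(R)\ge 3^{\lfloor R/2\rfloor}$ in Lemma~\ref{Lem: Neg}.

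\textbf{Part (2).} Your discriminant identity $(q-r)^2+4ps=(r+q)^2-4$, together with the separate $p=0$ case, is a complete verification of what the paper leaves implicit. Nonelementarity in the sense of \cite{GTT18} just means having two loxodromics with disjoint fixed-point sets on the boundary. Your pair $A$, $B$ already settles it, so the ping-pong and limit-set remark is unnecessary.

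\textbf{Minor slip.} Your first torsion bullet lists only orders $3,4,6$ and omits the order-$2$ element $-I$. This does no harm, because your second bullet, $-I\notin\Gamma(3)$, covers it.
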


As a corollary of Theorem \ref{Thm: GTT}, we have
\begin{corollary}\label{Cor: AnoIsGen}
    The set of Anosov matrices in $\sl(2,\Z)$ is generic.
\end{corollary}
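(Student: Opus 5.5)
The corollary follows by applying Theorem \ref{Thm: GTT} with $G=\sl(2,\Z)$ and $X=\mathbb H^2$. The plan is to check the hypotheses of that theorem using the Fact above, and then translate ``$\mathbb H^2$-loxodromic'' into ``Anosov''.

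First, by part (1) of the Fact, $\sl(2,\Z)$ is virtually free and hence word-hyperbolic. It is finitely generated, for instance by $\begin{pmatrix}1&1\\0&1\end{pmatrix}$ and $\begin{pmatrix}0&-1\\1&0\end{pmatrix}$, so the counting measure is defined for any finite generating set $F$. Second, $\mathbb H^2$ is a separable, proper, Gromov-hyperbolic geodesic metric space. Third, the action by Möbius transformations is by isometries, with $-I$ acting trivially. Non-elementarity is asserted in part (2) of the Fact. If I wanted to verify it directly, I would exhibit two loxodromic elements with disjoint fixed-point pairs on $\partial\mathbb H^2=\mathbb R\cup\{\infty\}$:
\[
A=\begin{pmatrix}2&1\\1&1\end{pmatrix}\quad\text{and}\quad B=\begin{pmatrix}1&1\\1&2\end{pmatrix}.
\]
Both have trace $3>2$, so both are hyperbolic. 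Their fixed points are irrational quadratic numbers. The fixed points of $A$ are the roots of $x^2-x-1=0$, namely $(1\pm\sqrt5)/2$. The fixed points of $B$ are the roots of $x^2+x-1=0$, namely $(-1\pm\sqrt5)/2$. These two pairs are distinct, so the action is non-elementary.

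Theorem \ref{Thm: GTT} then says that the set of $\mathbb H^2$-loxodromic elements has density $1$ in $\sl(2,\Z)$, with respect to any finite generating set. It remains to identify this set with the Anosov matrices. An element $\phi=\begin{pmatrix}r&s\\p&q\end{pmatrix}$ of $\sl(2,\Z)$ acts on $\mathbb H^2$ as follows:
\begin{itemize}
\item it is elliptic (or trivial) when $|r+q|<2$;
\item it is parabolic, or trivial when $\phi=\pm I$, when $|r+q|=2$;
\item it is loxodromic exactly when $|r+q|>2$.
\end{itemize}
By the trace classification recalled before the Fact, $|\mathrm{tr}\,\phi|>2$ is exactly the Anosov condition. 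So the loxodromic set coincides with the Anosov set, which is part (2) of the Fact, and the corollary follows.

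The only point needing any care is the matching of the two classifications at the boundary case $|\mathrm{tr}|=2$. This includes $\pm I$, which acts trivially and is therefore neither loxodromic nor Anosov, so the two sets still agree. Otherwise the argument is a direct citation.
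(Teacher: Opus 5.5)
Your proposal is correct and follows the paper's route: the corollary is obtained by applying Theorem \ref{Thm: GTT} to the action of $\sl(2,\Z)$ on $\mathbb H^2$. It uses the Fact for word-hyperbolicity, non-elementarity, and the identification of $\mathbb H^2$-loxodromics with Anosov ($|\mathrm{tr}|>2$) matrices. Your explicit independent loxodromics $A$, $B$ and your treatment of the $|\mathrm{tr}|=2$ case (including $\pm I$) only fill in details the paper leaves to the Fact.
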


\begin{lemma}\label{Lem: p=0}
    Let $\phi=\begin{pmatrix}
    r & s\\ p & q
\end{pmatrix}\in \sl(2,\Z)$. If $p=0$, then $\phi$ is reducible.
\end{lemma}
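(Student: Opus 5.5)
The plan is a direct computation with the determinant, followed by the trace criterion for the three types recalled above. Suppose $p=0$. Since $\phi\in\sl(2,\Z)$, we have $rq-sp=1$, which becomes $rq=1$. As $r,q\in\Z$, this forces $r=q=1$ or $r=q=-1$. The trace of $\phi$ is then $r+q=\pm 2$, so $|\operatorname{tr}\phi|=2$.

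By the classification quoted from \cite[\textsection 13.1]{FM12}, a matrix in $\sl(2,\Z)$ with trace of absolute value exactly $2$ represents a reducible mapping class. Hence $\phi$ is reducible. One can also see this concretely: $\phi=\pm\begin{pmatrix}1 & \pm s\\ 0 & 1\end{pmatrix}$ fixes the direction $(1,0)^T$ up to sign. Equivalently, it preserves the isotopy class of the corresponding simple closed curve on $T^2$, since it is $\pm$ a power of a Dehn twist.

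The only subtle point is a convention. If $s=0$, then $\phi=\pm I$. The trace convention in the paper still classifies this as reducible, since its trace has absolute value $2$. Some authors would instead call $\pm I$ periodic. I will follow the trace-based convention stated in the paper, so no case split is needed. Beyond this, there is no real obstacle: the argument is a one-line determinant calculation.
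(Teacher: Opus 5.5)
Your proposal is correct and follows essentially the same argument as the paper: from $p=0$ the determinant condition gives $rq=1$, hence $r=q=\pm 1$, and the trace $\pm 2$ makes $\phi$ reducible under the stated trace criterion. Your side remark about $\pm I$ under the paper's trace-based convention is a fair observation. It does not change the argument.
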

\begin{proof}
    If $p=0$, then $\det(\phi)=1$ gives that $rq=1$. Since $r,q\in \Z$, this forces that $r=q=\pm 1$. Then the trace of $\phi$ is $\pm 2$ and thus $\phi$ is reducible.
\end{proof}

\section{Main results}

\subsection{Genericity of $\Q$-homology spheres}
\begin{proposition}\cite[Proposition 11.3.1]{Mar16}\label{Prop: 1stZHom}
    Let $L\subset S^3$ be a link with $k$ components and $M$ its complement. We have $H_1(M,\Z)=\Z^k$, generated by the $k$ meridians.
\end{proposition}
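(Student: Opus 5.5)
The plan is to compute $H_1(M,\Z)$ with the Mayer--Vietoris sequence for the decomposition $S^3 = M \cup N(L)$, and to identify generators using Alexander duality. Write $N(L)=\bigsqcup_{i=1}^k V_i$, a disjoint union of solid tori. Then $M\cap N(L)=\partial M=\bigsqcup_i T_i$ is a union of $k$ tori. Since $H_2(S^3)=H_1(S^3)=0$, the relevant segment of the sequence is
$$0\to H_1(\partial M)\to H_1(M)\oplus H_1(N(L))\to 0 .$$
So the middle map is an isomorphism $\Z^{2k}\to H_1(M)\oplus\Z^k$.

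To see that $H_1(M)$ is free of rank $k$, I would first record the following facts.
\begin{itemize}
\item $H_1(\partial M)\cong\Z^{2k}$, with basis $\{m_i,l_i\}$, where $m_i$ is the meridian of $T_i$ and $l_i$ is a longitude of $T_i$.
\item $H_1(V_i)\cong\Z$ is generated by the core. Under inclusion, $m_i\mapsto 0$ and $l_i\mapsto 1$.
\end{itemize}
Composing the isomorphism with the projection onto the $H_1(N(L))$ factor gives a surjection $\Z^{2k}\to\Z^k$. Its kernel is the span of the meridians $m_1,\dots,m_k$.

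The isomorphism sends $\ker(H_1(\partial M)\to H_1(N(L)))$ bijectively onto $H_1(M)\oplus 0$. Hence $H_1(M)$ is free abelian of rank $k$, with basis the images of $m_1,\dots,m_k$. This is exactly the claim, including the statement about generators.

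As an independent check on the rank, Alexander duality gives $\tilde H_1(S^3\setminus L)\cong \check H^1(L)\cong\Z^k$. The duality pairing is the linking number, and $\operatorname{lk}(m_i,L_j)=\delta_{ij}$, so this identifies the meridians as a dual basis.

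The only delicate point is bookkeeping: one must confirm that the middle Mayer--Vietoris map really is an isomorphism, which requires $H_2(S^3)=0$ and that $H_0$ of the intersection injects. I would also use the standard longitude (null-homologous in $S^3\setminus L_i$) only for definiteness. Any longitude maps to a generator of $H_1(V_i)$, so this choice does not affect the argument. Since the statement is classical (\cite[Proposition 11.3.1]{Mar16}), the proof can be given briefly along these lines.
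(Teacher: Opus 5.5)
Your Mayer--Vietoris argument is correct and complete. Because $H_2(S^3)=H_1(S^3)=0$, the map $H_1(\partial M)\to H_1(M)\oplus H_1(N(L))$ is an isomorphism. The preimage of the summand $H_1(M)\oplus 0$ is the kernel of $H_1(\partial M)\to H_1(N(L))$, which is the span of the meridians, so their images form a basis of $H_1(M)\cong\Z^k$.

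The paper gives no proof of its own; it only cites \cite[Proposition 11.3.1]{Mar16}, and your computation is the standard argument for it. One small correction to your bookkeeping remark: surjectivity of the middle map comes from $H_1(S^3)=0$, not from injectivity on $H_0$, which plays no role here.
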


Recall that $\xi: \sl(2,\Z)\to \Q\cup\{\infty\}$ is defined by $\xi(\begin{pmatrix}
    r & s\\ p & q
\end{pmatrix})=p/q$. 

\begin{lemma}
    Let $K\subset S^3$ be a knot and $M$ its complement. If $\phi\in \sl(2,\Z)$ is not reducible, then the $\xi(\phi)$-Dehn filling of $M$ along $\partial M$ gives a $\Q$-homology sphere.
\end{lemma}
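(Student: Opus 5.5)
Taking the contrapositive of Lemma \ref{Lem: p=0}, $\phi=\begin{pmatrix} r & s\\ p & q\end{pmatrix}$ not reducible forces $p\neq 0$. It then suffices to show that the $p/q$-Dehn filling $M(p/q)$ of a knot complement has $H_1(M(p/q),\Z)\cong \Z/|p|$. This group is finite exactly when $p\neq 0$, and then $H_1(M(p/q),\Q)=0$.

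The homology computation goes as follows. Write $M(p/q)=M\cup_\phi M_0$ with $M_0$ a solid torus. By Proposition \ref{Prop: 1stZHom}, $H_1(M,\Z)\cong\Z$, generated by the meridian $m$. The Seifert longitude $l$ bounds a Seifert surface in $M$, so it is null-homologous there. Gluing in $M_0$ adds a 2-handle along $\phi(m_0)=pm+ql$ and then a 3-ball, which does not change $H_1$; equivalently, use Mayer--Vietoris or van Kampen. So $H_1(M(p/q),\Z)\cong \Z\langle m\rangle/\langle pm+ql\rangle=\Z\langle m\rangle/\langle pm\rangle\cong \Z/|p|$. Here we must use the convention that $l$ is the preferred (null-homologous) longitude, which is implicit in the paper's setup of Dehn surgery on links in $S^3$. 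For $p\neq 0$ this group is finite.

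For the remaining homology groups, $M(p/q)$ is closed, orientable and connected. Hence $H_0=H_3=\Z$, and by Poincaré duality and universal coefficients $H_2(M(p/q),\Q)\cong H^1(M(p/q),\Q)\cong \mathrm{Hom}(H_1,\Q)=0$. Therefore $M(p/q)$ has the rational homology of $S^3$, i.e., it is a $\Q$-homology sphere.

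The only delicate point is the longitude convention. The first step only gives $p\neq0$, and if $l$ were an arbitrary framing curve homologous to $nm$, the relation would become $(p+qn)m=0$, which could vanish. So I would state explicitly that $(m,l)$ is the standard meridian–longitude pair with $l$ null-homologous in $M$. Everything else is routine.
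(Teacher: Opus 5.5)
Your proof is correct and follows the paper's argument: Lemma \ref{Lem: p=0} gives $p\neq 0$, and Proposition \ref{Prop: 1stZHom} then gives $H_1$ of the filled manifold as $\Z/p\Z$. Your added details make explicit two points the paper's proof leaves implicit: that $l$ must be the null-homologous longitude, and the Poincar\'e duality step for the higher rational homology.
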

\begin{proof}
    Let $\phi\in \sl(2,\Z)$ be a non-reducible element and $\bar M$ be the $\xi(\phi)$-Dehn filling of $M$ along $\partial M$. 

     Suppose that $\phi=\begin{pmatrix}
    r & s\\ p & q
\end{pmatrix}$. By Lemma \ref{Lem: p=0}, $p\neq 0$. Fix a meridian $m$ and a longitude $l$ of $\partial M$. Then $\xi(\phi)$-Dehn filling kills the curve $pm+ql$. Since $p\neq 0$, together with Proposition \ref{Prop: 1stZHom}, we get that $H_1(\bar M, \Z)=\Z/p\Z$. Then the conclusion follows.
\end{proof}

Similarly, we have
\begin{lemma}\label{Lem: AnosovImpQHomSph}
    Let $L\subset S^3$ be a link with $k$ components and $M$ its complement. If $\phi_1,\cdots,\phi_k\in \sl(2,\Z)$ are not reducible, then the $(\xi(\phi_1),\cdots,\xi(\phi_k))$-Dehn filling of $M$ along $\partial M$ gives a $\Q$-homology sphere.
\end{lemma}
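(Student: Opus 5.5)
The plan is to imitate the knot case: write down a presentation of $H_1(\bar M,\Z)$ and show that it is a finite group. Here $\bar M$ denotes the $(\xi(\phi_1),\cdots,\xi(\phi_k))$-Dehn filling of $M$. First I write $\phi_i=\begin{pmatrix} r_i & s_i\\ p_i & q_i\end{pmatrix}$. Since each $\phi_i$ is not reducible, Lemma \ref{Lem: p=0} gives $p_i\neq 0$ for every $i$.

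Next I fix meridians $m_i$ and preferred longitudes $l_i$ on the boundary tori $T_i$ of $M$. By Proposition \ref{Prop: 1stZHom}, $H_1(M,\Z)=\Z^k$ is freely generated by $m_1,\dots,m_k$. In this basis each longitude is
\[
l_i=\sum_{j\neq i}\mathrm{lk}(L_i,L_j)\,m_j ,
\]
where $L_i$ are the components of $L$. Gluing in the $k$ solid tori adds one relation per component, namely $p_im_i+q_il_i=0$, and the $2$- and $3$-cells add nothing more to $H_1$ (Mayer--Vietoris). Hence $H_1(\bar M,\Z)$ is the cokernel of the integer matrix
\[
A=\mathrm{diag}(p_1,\dots,p_k)+\mathrm{diag}(q_1,\dots,q_k)\,\Lambda ,
\]
where $\Lambda$ is the linking matrix of $L$ with zero diagonal. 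So $\bar M$ is a $\Q$-homology sphere if and only if $\det A\neq 0$, and in that case $|H_1(\bar M,\Z)|=|\det A|$.

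The main obstacle is the last step, proving $\det A\neq 0$, and I expect it to fail in general. When $\Lambda=0$ (for example, when all pairwise linking numbers vanish), $A$ is diagonal, $\det A=p_1\cdots p_k\neq 0$, and the statement follows exactly as in the knot lemma. When $\Lambda\neq 0$, the off-diagonal terms $q_i\,\mathrm{lk}(L_i,L_j)$ can cancel the diagonal. For the Hopf link with slopes $2/1$ and $1/2$, both realizable by Anosov matrices such as $\begin{pmatrix}3&1\\2&1\end{pmatrix}$ and $\begin{pmatrix}1&1\\1&2\end{pmatrix}$, one gets $\det A=2\cdot 1-1\cdot 2=0$. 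So the hypothesis ``not reducible'' alone does not give $\det A\neq 0$, and the statement as worded does not hold for arbitrary links.

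Within this approach the argument is complete only for links with vanishing linking matrix, and that is the version I would prove. For the later genericity results I would not claim the lemma for arbitrary links. Instead I would show that the condition $\det A\neq 0$ holds for a generic tuple $(\phi_1,\dots,\phi_k)$. My intended route is to fix all entries but one, say $(p_k,q_k)$, so that $\det A$ becomes a nontrivial affine function of $(p_k,q_k)$. Its zero set is then a line, and I expect a proper linear condition on the bottom row to be negligible for the counting measure on $\sl(2,\Z)$.
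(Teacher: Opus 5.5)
Your analysis is correct. The defect it exposes is in the paper, not in your proposal: the lemma is false as stated.

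The paper gives no separate proof, only ``Similarly'' after the knot case. The knot argument uses that the preferred longitude is null-homologous in a knot complement, so killing $pm+ql$ amounts to killing $pm$. Carried over to a link, this would give $H_1(\bar M,\Z)\cong\bigoplus_i\Z/p_i\Z$, which needs every $l_i$ to vanish in $H_1(M,\Z)$. Since $l_i=\sum_{j\neq i}\mathrm{lk}(L_i,L_j)\,m_j$, that happens exactly when $\Lambda=0$, which is precisely the restricted version you isolate.

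Your presentation matrix $A=\mathrm{diag}(p_i)+\mathrm{diag}(q_i)\Lambda$ is right, and your Hopf link example checks out. Both matrices have determinant $1$ and traces $4$ and $3$, and the two relations are proportional whatever the orientations, so $H_1(\bar M,\Z)\cong\Z$. Concretely, a Rolfsen twist on the component with coefficient $1/2$ turns this into $0$-surgery on an unknot, so $\bar M\cong S^2\times S^1$.

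The failure is not isolated. Since $\mathrm{tr}(T^m\phi)=\mathrm{tr}(\phi)+mp$ for $T=\begin{pmatrix}1&1\\0&1\end{pmatrix}$, every slope $p/q$ with $p\neq 0$ equals $\xi$ of some Anosov matrix. So ``not reducible'' only excludes the slope $0$, and already surgery on the Hopf link with coefficients $1$ and $1$ is a counterexample.

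Consequently Proposition \ref{Prop: GenQHomSph} and the results built on it need a different argument, and your genericity route is the right repair. Lemma \ref{Lem: p=1} has the same defect: the surgery just mentioned has $p_1=p_2=1$ but $H_1\cong\Z$. Two points are needed to make your route rigorous.

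\textbf{Nontriviality needs an induction.} The $k$-th row of $A$ is $(q_k\Lambda_{k1},\dots,q_k\Lambda_{k,k-1},p_k)$, so $\det A=\alpha p_k+\beta q_k$. Here $\alpha$ is the determinant of the analogous matrix for the sublink $L_1\cup\dots\cup L_{k-1}$. This linear form is not automatically nontrivial, so you should induct on $k$ and first discard the tuples with $\alpha=0$.

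\textbf{Negligibility must be uniform in the slope.} When $\alpha\neq0$ the zero locus is one slope $\pm(p,q)$ depending on $\phi_1,\dots,\phi_{k-1}$, i.e.\ $\phi_k\in\E_{p,q}\cup\E_{-p,-q}$. So you need negligibility of $\E_{p,q}$ uniformly in $(p,q)$. This holds because $\E_{p,q}=\langle T\rangle\phi$ is a coset of a cyclic subgroup, and such subgroups are undistorted in the hyperbolic group $\sl(2,\Z)$. Hence $\sharp(B_F(R)\cap\E_{p,q})\le\sharp(\langle T\rangle\cap B_F(2R))=O(R)$, with constants independent of $(p,q)$.

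The number of bad $k$-tuples in $B_F(R)^k$ is then at most $\sharp B_F(R)$ times the number of bad $(k-1)$-tuples, plus $O(R)\,\sharp B_F(R)^{k-1}$. This is $o(\sharp B_F(R)^k)$ by induction and exponential growth.
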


By combining Corollary \ref{Cor: AnoIsGen} and Lemma \ref{Lem: AnosovImpQHomSph}, we have

\begin{proposition}\label{Prop: GenQHomSph}
    Let $L\subset S^3$ be a link with $k$ components and $M$ its complement. A generic Dehn surgery on $L$ gives a $\Q$-homology sphere, i.e. $$\lim_{R\to \infty}\frac{\splitfrac{\sharp\{\phi_1,\cdots,\phi_k\in B_F(R):}{ (\xi(\phi_1),\cdots,\xi(\phi_k))\text{-Dehn filling of } M \text{ along } \partial M \text{ gives a } \Q\text{-homology sphere}\}}}{\sharp B_F(R)^k}= 1$$ for any finite generating set $F$ of $\sl(2,\Z)$.
\end{proposition}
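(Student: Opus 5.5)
The plan is to combine Lemma \ref{Lem: AnosovImpQHomSph} with Corollary \ref{Cor: AnoIsGen}, using a product estimate over the $k$ coordinates. Let $A\subset \sl(2,\Z)$ be the set of Anosov matrices. An Anosov matrix has trace of absolute value $>2$, so it is in particular not reducible. Hence, by Lemma \ref{Lem: AnosovImpQHomSph}, every tuple $(\phi_1,\cdots,\phi_k)\in A^k$ gives a $(\xi(\phi_1),\cdots,\xi(\phi_k))$-Dehn filling of $M$ that is a $\Q$-homology sphere. The set in the numerator therefore contains $(A\cap B_F(R))^k$, so for every $R$
$$1\ \ge\ \frac{\sharp\{\cdots\}}{\sharp B_F(R)^k}\ \ge\ \left(\frac{\sharp (A\cap B_F(R))}{\sharp B_F(R)}\right)^k .$$

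Next I would invoke Corollary \ref{Cor: AnoIsGen}, which comes from Theorem \ref{Thm: GTT}. Its input is that $\sl(2,\Z)$ is hyperbolic and acts non-elementarily on $\mathbb H^2$, with the loxodromics being exactly the Anosov matrices. It gives $\sharp(A\cap B_F(R))/\sharp B_F(R)\to 1$ as $R\to\infty$, for any finite generating set $F$. Since $k$ is fixed, the $k$-th power also tends to $1$. The squeeze then shows that the limit in the statement exists and equals $1$.

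There is no real obstacle here. The only things to make explicit are the following.
\begin{enumerate}
\item Lemma \ref{Lem: AnosovImpQHomSph} itself, stated with ``similarly''. I would check it by the same computation as the knot case. Proposition \ref{Prop: 1stZHom} gives $H_1(M,\Z)\cong\Z^k$, generated by the meridians $m_i$. Each filling kills the class $p_i m_i + q_i l_i$. For a link, the longitude $l_i$ is homologous to $\sum_{j\ne i}\mathrm{lk}(K_i,K_j)\,m_j$.

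The filled manifold therefore has $H_1$ presented by the $k\times k$ integer matrix with diagonal entries $p_i$ and off-diagonal entries $q_i\,\mathrm{lk}(K_i,K_j)$. It is a $\Q$-homology sphere exactly when this determinant is nonzero. This does not follow from $p_i\neq 0$ alone once linking numbers are present, so the lemma as stated may need an extra hypothesis, or a genericity argument of its own.

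\item If the lemma needs repair, I would replace it by a genericity argument. For fixed $\phi_2,\dots,\phi_k$, the determinant is an affine function of $(p_1,q_1)$. Its vanishing locus is then a negligible set of $\phi_1$, because the matrices with $p/q$ equal to one fixed slope form a coset of a cyclic subgroup, which has zero density in the exponentially growing group $\sl(2,\Z)$.

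Applying this coordinate by coordinate, with Fubini-type counting over $B_F(R)^k$, would still give density $1$. I expect this issue to be the main subtlety. In the knot case, or for links with pairwise zero linking numbers, the first two paragraphs already give a complete proof.
\end{enumerate}
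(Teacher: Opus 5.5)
Your first two paragraphs are exactly the paper's proof. The paper obtains the proposition in one line by combining Corollary~\ref{Cor: AnoIsGen} with Lemma~\ref{Lem: AnosovImpQHomSph}, which is your lower bound $\bigl(\sharp(A\cap B_F(R))/\sharp B_F(R)\bigr)^k$.

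Your doubt about Lemma~\ref{Lem: AnosovImpQHomSph} is justified, and it points to a genuine gap in that argument. Take the Hopf link: your presentation matrix has determinant $p_1p_2-q_1q_2$, for either orientation. The matrix $\phi_1=\phi_2=\begin{pmatrix}2&1\\1&1\end{pmatrix}$ is Anosov (trace $3$) and has $\xi(\phi_i)=1$. The resulting $(1,1)$-filling has $H_1\cong\Z$; it is $S^1\times S^2$. So the Anosov argument only covers knots and links with pairwise zero linking numbers, where $H_1=\bigoplus_i\Z/p_i\Z$. For general links a different argument, such as your determinant count, is needed.

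Your repair is the right idea, but as written it has a hole. For fixed $\phi_2,\dots,\phi_k$ the determinant is the linear form $a\,p_1+b\,q_1$. Here $a=C_{11}$ and $b=\sum_{j\ne 1}\mathrm{lk}(K_1,K_j)\,C_{1j}$ are cofactors built from $\phi_2,\dots,\phi_k$. When $a=b=0$, the vanishing locus is all of $\sl(2,\Z)$, not a negligible set. This really occurs, for example for a split link with $\xi(\phi_2)=0$.

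The fix is induction on $k$:
\begin{enumerate}
\item The cofactor $a$ is exactly the determinant for the sublink $L'=K_2\cup\dots\cup K_k$ filled along $\xi(\phi_2),\dots,\xi(\phi_k)$. So $\{a=0\}$ is negligible in $\sl(2,\Z)^{k-1}$ by the inductive hypothesis applied to $L'$.
\item The base case $k=1$ has bad set $\{p_1=0\}=\pm\langle T\rangle$, where $T=\begin{pmatrix}1&1\\0&1\end{pmatrix}$.
\item When $(a,b)\neq(0,0)$, all bad $\phi_1$ share one slope, so they lie in $\pm\langle T\rangle\phi_0$ for some $\phi_0$.
\item You then need the number of such elements in $B_F(R)$ to be $O(R)$ \emph{uniformly} in $\phi_0$. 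This holds because $T^m\phi_0,T^{m'}\phi_0\in B_F(R)$ forces $T^{m-m'}\in B_F(2R)$, and $\langle T\rangle$ is undistorted in the hyperbolic group $\sl(2,\Z)$.
\end{enumerate}
Write $N_L(R)$ for the number of tuples in $B_F(R)^k$ whose filling of $L$ is not a $\Q$-homology sphere. Then $N_L(R)\le \sharp B_F(R)\,N_{L'}(R)+O(R)\,\sharp B_F(R)^{k-1}$. Dividing by $\sharp B_F(R)^k$ and using the exponential growth of $\sharp B_F(R)$ completes the proof.
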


Let $B_n$ be the braid group with $n>3$ strands. Each element $f$ in $B_n$ determines a link in $S^3$ which is the closure $\hat f$ of $f$. Conversely, a well-known theorem of Alexander \cite{Ale23} says that every oriented link diagram may be isotoped to a closed braid. Hence, it is natural to take $B_n$ as a model to study the random phenomenon of knots and links. 

As a corollary of Proposition \ref{Prop: GenQHomSph}, one gets that

\begin{corollary}\label{Cor: GenQHomSph}
    For every $n>3$, a generic Dehn surgery on the closure of a braid in $B_n$ gives a $\Q$-homology sphere, i.e. 
    $$\lim_{R\to \infty}\sum_{k=1}^n\frac{\splitfrac{\sharp\{f\in B_F(R),\phi_1,\cdots,\phi_k\in B_{F'}(R):\hat f \text{ is a link with }k \text{ components, }}{ (\xi(\phi_1),\cdots,\xi(\phi_k))\text{-Dehn filling of } S^3\setminus N(\hat f) \text{ gives a } \Q\text{-homology sphere}\}}}{\sharp B_F(R)\sharp B_{F'}(R)^k}= 1$$
    for any finite generating set $F$ of $B_n$ and any finite generating set $F'$ of $\sl(2,\Z)$.
\end{corollary}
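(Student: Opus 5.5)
The plan is to reduce the statement to Corollary \ref{Cor: AnoIsGen} and Lemma \ref{Lem: AnosovImpQHomSph}, using that every closed braid in $B_n$ has between $1$ and $n$ components and that the lower bound coming from Lemma \ref{Lem: AnosovImpQHomSph} does not depend on the braid $f$.

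First I would rewrite the $k$-th summand as a sum over braids. For $R>0$ and $1\le k\le n$, let
$$A_k(R):=\{f\in B_F(R): \hat f \text{ has } k \text{ components}\},\qquad a_k(R):=\frac{\sharp A_k(R)}{\sharp B_F(R)}.$$
For each $f\in A_k(R)$, let $G_f(R)$ be the set of tuples $(\phi_1,\dots,\phi_k)\in B_{F'}(R)^k$ for which the $(\xi(\phi_1),\dots,\xi(\phi_k))$-Dehn filling of $S^3\setminus N(\hat f)$ is a $\Q$-homology sphere. The numerator of the $k$-th summand is then $\sum_{f\in A_k(R)}\sharp G_f(R)$. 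Since the sets $A_1(R),\dots,A_n(R)$ partition $B_F(R)$, we have $\sum_{k=1}^n a_k(R)=1$. The trivial bound $\sharp G_f(R)\le \sharp B_{F'}(R)^k$ then shows that the whole sum is at most $1$ for every $R$.

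For the lower bound, let $N(R)\subset B_{F'}(R)$ be the set of non-reducible elements and set $\varepsilon(R):=1-\sharp N(R)/\sharp B_{F'}(R)$. By Lemma \ref{Lem: AnosovImpQHomSph}, applied to the link $\hat f$ with $k$ components, $N(R)^k\subset G_f(R)$ for every $f\in A_k(R)$, whatever the link $\hat f$ is. Hence the $k$-th summand is at least $a_k(R)\,(1-\varepsilon(R))^k\ge a_k(R)\,(1-\varepsilon(R))^n$. Summing over $k$ gives
$$1\;\ge\;\sum_{k=1}^n(\cdots)\;\ge\;(1-\varepsilon(R))^n.$$

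It remains to show $\varepsilon(R)\to 0$. Anosov matrices have trace of absolute value $>2$, so they are not reducible, and the set of Anosov matrices is contained in the non-reducible set. By Corollary \ref{Cor: AnoIsGen}, which holds for any finite generating set $F'$, the Anosov matrices are generic, so $\sharp N(R)/\sharp B_{F'}(R)\to 1$. Because $n$ is fixed, $(1-\varepsilon(R))^n\to 1$, and the squeeze gives the claimed limit.

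There is no serious obstacle. The one point needing care is that the good set $G_f(R)$ depends on the link $\hat f$ through $f$, so one cannot directly quote Proposition \ref{Prop: GenQHomSph} for a single link. The uniform, $f$-independent inclusion $N(R)^k\subset G_f(R)$ handles this, together with the uniform bound $k\le n$, which is what lets the exponent be absorbed into $(1-\varepsilon(R))^n$.
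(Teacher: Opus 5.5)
Your skeleton is the paper's: split $B_F(R)$ by the number of components of $\hat f$, bound each summand from below, and squeeze. Insisting on a bound independent of $f$ is a real improvement. The paper fixes one link and quotes Proposition~\ref{Prop: GenQHomSph}, which only gives an $R_k$ depending on $\hat f$, yet the links $\hat f$ with $f\in B_F(R)$ change as $R$ grows.

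The step that fails is the inclusion $N(R)^k\subset G_f(R)$. It rests on Lemma~\ref{Lem: AnosovImpQHomSph}, which is false as soon as some linking number is nonzero. The knot argument does not carry over, because in $H_1(M,\Z)=\Z^k$ the longitude of $L_i$ is $l_i=\sum_{j\neq i}\mathrm{lk}(L_i,L_j)\,m_j$, not $0$.

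For a concrete counterexample, take $f=\sigma_1^2\sigma_2\sigma_3\in B_4$, whose closure is the Hopf link. Take $\phi_1=\phi_2=\begin{pmatrix}2&1\\1&1\end{pmatrix}$, which has trace $3$ and is therefore Anosov. The filling kills $m_1+l_1=m_1+\epsilon m_2$ and $m_2+l_2=m_2+\epsilon m_1$, where $\epsilon=\mathrm{lk}(L_1,L_2)=\pm1$. These two relations generate the same subgroup, so $H_1=\Z$; in fact the result is $S^1\times S^2$. Hence $(\phi_1,\phi_2)\in N(R)^2\setminus G_f(R)$ for all large $R$. As written, neither your argument nor the paper's (which passes through the same lemma) proves the corollary.

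The statement is nevertheless true, and your squeeze survives once $\varepsilon(R)$ is replaced by a correct uniform bound.

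1. The filling is a $\Q$-homology sphere if and only if $\det\Lambda\neq0$, where $\Lambda_{ii}=p_i$ and $\Lambda_{ij}=q_i\,\mathrm{lk}(L_i,L_j)$ for $j\neq i$.
2. Expanding along the first row gives $\det\Lambda=p_1\det\Lambda'+q_1E$. Here $\Lambda'$ is the matrix of the sublink $L_2\cup\dots\cup L_k$, and $E$ depends only on $\phi_2,\dots,\phi_k$.
3. So a bad tuple either has $\det\Lambda'=0$ (bad for the sublink), or its slope $\xi(\phi_1)$ is determined by $\phi_2,\dots,\phi_k$.
4. The matrices with a prescribed slope $p/q$ form $\E_{p,q}\cup\E_{-p,-q}$. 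Apply Lemma~\ref{Lem: NormEstimate2} with the base point chosen inside the ball, and use $B_{F'}(R)\subseteq B_F(cR)$ with $c=\max_{A\in F'}|A|_F$. Then this set meets $B_{F'}(R)$ in at most $2(4cR+1)$ elements, whatever the slope.
5. Induction on $k$ gives $\sharp\bigl(B_{F'}(R)^k\setminus G_f(R)\bigr)\le 2k(4cR+1)\,\sharp B_{F'}(R)^{k-1}$ for every link with $k$ components. The constant does not depend on the link.

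Since $\sharp B_{F'}(R)$ grows exponentially, each summand is at least $a_k(R)\bigl(1-2n(4cR+1)/\sharp B_{F'}(R)\bigr)$. Summing over $k$ then finishes the proof exactly as you intended.
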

\begin{proof}
    For any braid $f\in B_n$, it is clear that the closure $\hat f$ of $f$ has at most $n$ components. Hence, for any $R>0$ and any finite generating set $F$ of $B_n$, we have $$\sum_{k=1}^n\frac{\sharp\{f\in B_F(R): \hat f \text{ is a link with }k \text{ components}\}}{\sharp B_F(R)}=1.$$

    Fix a link $\hat f$ with $k$ components and a finite generating set $F'$ of $\sl(2,\Z)$. By Proposition \ref{Prop: GenQHomSph}, for any $\epsilon>0$, there exists $R_k>0$ such that $$\frac{\splitfrac{\sharp\{\phi_1,\cdots,\phi_k\in B_{F'}(R):}{ (\xi(\phi_1),\cdots,\xi(\phi_k))\text{-Dehn filling of } S^3\setminus N(\hat f) \text{ gives a } \Q\text{-homology sphere}\}}}{\sharp B_{F'}(R)^k}>1-\epsilon$$
    for any $R\ge R_k$. By letting $R\ge\max\{R_1,\cdots, R_n\}$, one has that $$\sum_{k=1}^n\frac{\splitfrac{\sharp\{f\in B_F(R),\phi_1,\cdots,\phi_k\in B_{F'}(R):\hat f \text{ is a link with }k \text{ components, }}{ (\xi(\phi_1),\cdots,\xi(\phi_k))\text{-Dehn filling of } S^3\setminus N(\hat f) \text{ gives a } \Q\text{-homology sphere}\}}}{\sharp B_F(R)\sharp B_{F'}(R)^k}>1-\epsilon.$$ Letting $\epsilon\to 0$ completes the proof.
\end{proof}

Together with Theorem \ref{Thm: LW} and Alexander's Theorem \cite{Ale23}, we have
\begin{theorem}\label{Thm: GenQHomSph}
    A generic closed orientable 3-manifold is a $\Q$-homology sphere: for any finite generating set $F_n$ of $B_n(n\ge 3)$ and any finite generating set $F$ of $\sl(2,\Z)$, there exists an unbounded and monotonically increasing function $\alpha: \N_{\ge 3}\to \N_{\ge 3}$ such that 
    $$\lim_{n\to \infty}\frac{1}{\sharp B_{F_n}(n)}\sum_{k=1}^n\frac{\splitfrac{\sharp\{f\in B_{F_n}(n),\phi_1,\cdots,\phi_k\in B_{F}(\alpha(n)):\hat f \text{ is a link with }k \text{ components, }}{ (\xi(\phi_1),\cdots,\xi(\phi_k))\text{-Dehn filling of } S^3\setminus N(\hat f) \text{ gives a } \Q\text{-homology sphere}\}}}{\sharp B_{F}(\alpha(n))^k}= 1.$$
\end{theorem}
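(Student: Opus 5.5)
The statement is a diagonal version of Corollary \ref{Cor: GenQHomSph}, in which the braid index $n$ and the word-length radius for $B_n$ grow simultaneously. I will obtain it by a diagonal choice of $\alpha$. The key observation is that the $\Q$-homology sphere property depends only on the $\phi_i$ being non-reducible (Lemma \ref{Lem: AnosovImpQHomSph}); the link $\hat f$ plays no role. So I will bound the inner ratio from below uniformly in $f$ and $k$, using only the density of non-reducible elements in $\sl(2,\Z)$.

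\emph{Step 1 (uniform bound in $k$).} For $R>0$ let $p(R)=\sharp\{\phi\in B_F(R):\phi \text{ Anosov}\}/\sharp B_F(R)$. By Corollary \ref{Cor: AnoIsGen}, $p(R)\to 1$. Anosov elements are not reducible, so by Lemma \ref{Lem: AnosovImpQHomSph}, for any $f$ whose closure has $k$ components, the fraction of tuples $(\phi_1,\dots,\phi_k)\in B_F(R)^k$ giving a $\Q$-homology sphere is at least $p(R)^k$. Since $k\le n$, this is at least $p(R)^n$.

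\emph{Step 2 (choice of $\alpha$).} For each $n\ge 3$, choose $R_n$ such that $p(R)^n\ge 1-1/n$ for all $R\ge R_n$. Such $R_n$ exists because $p(R)\to1$; explicitly, it suffices that $p(R)\ge (1-1/n)^{1/n}$. Then define $\alpha(n)=\max\{n,R_3,\dots,R_n\}$ (rounded up to an integer $\ge 3$). This $\alpha$ is monotonically increasing and unbounded, and $\alpha(n)\ge R_n$.

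\emph{Step 3 (summing).} For fixed $n$, partition $B_{F_n}(n)$ by the number $k$ of components of $\hat f$, exactly as in the proof of Corollary \ref{Cor: GenQHomSph}; the parts have total proportion $1$. The expression in the theorem is then a weighted average, over $f\in B_{F_n}(n)$, of the per-$f$ fractions from Step 1. Each of these fractions is at least $1-1/n$ when the radius is $\alpha(n)$. Hence the whole expression lies in $[1-1/n,1]$, and it tends to $1$ as $n\to\infty$.

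The main obstacle is the dependence on $n$. The number of surgery coefficients $k$ can be as large as $n$, so a bound for fixed $k$ is not uniform, and the product $p(R)^k$ must be controlled as $k$ grows. This is exactly why the radius $\alpha(n)$ must grow with $n$ and cannot simply equal $n$. The exponent-$n$ estimate in Step 2 handles it, because Corollary \ref{Cor: AnoIsGen} holds for the single fixed group $\sl(2,\Z)$ with its fixed generating set $F$, and so gives a single function $p(R)$ independent of $n$.
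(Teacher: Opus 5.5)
Your argument has the same structure as the paper's proof. The paper also bounds each per-$f$ ratio via Corollary~\ref{Cor: AnoIsGen} and Lemma~\ref{Lem: AnosovImpQHomSph}. It then runs the proof of Corollary~\ref{Cor: GenQHomSph} with $\epsilon=1/n$, taking $R\ge\max\{R_1,\dots,R_n\}$, and imposes $\alpha(n)\ge\max\{\alpha(n-1),n\}$. Your bound $p(R)^k\ge p(R)^n$ is an explicit, visibly link-independent version of that maximum.

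The problem is the step you call the key observation: that only non-reducibility of the $\phi_i$ matters and $\hat f$ plays no role. This is false for links. The gap is inherited from Lemma~\ref{Lem: AnosovImpQHomSph}, which is valid for knots (and for links whose pairwise linking numbers all vanish) but not in general.

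In $M=S^3\setminus N(L)$ the longitude $l_i$ is homologous to $\sum_{j\neq i}\mathrm{lk}(K_i,K_j)\,m_j$. So $H_1$ of the filling is presented by the $k\times k$ matrix $A$ with $A_{ii}=p_i$ and $A_{ij}=q_i\,\mathrm{lk}(K_i,K_j)$ for $i\neq j$, and a $\mathbb Q$-homology sphere requires $\det A\neq 0$, not merely $p_i\neq 0$. For a counterexample, take the Hopf link, which is a closed braid in every $B_n$ after stabilization. Take $\phi_1=\phi_2=\begin{pmatrix}2&1\\1&1\end{pmatrix}$, which has trace $3$ and is therefore Anosov. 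Then $\det A=p_1p_2-q_1q_2=0$, and the filling is $S^2\times S^1$. So the inequality in your Step~1 has no valid justification as written.

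The statement itself survives. Your Steps~2 and~3 go through unchanged once Step~1 is replaced by a correct link-uniform bound. Expanding along the last row gives $\det A=p_kD+q_kD'$, where $D$ is the analogous determinant for the sublink $K_1\cup\dots\cup K_{k-1}$ and $D,D'$ do not depend on $\phi_k$.

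If $D\neq 0$, the slope $\xi(\phi_k)$ is forced to equal $-D'/D$. Hence $\phi_k\in\E_{p,q}\cup\E_{-p,-q}$ for one primitive $(p,q)$. Applying Lemma~\ref{Lem: NormEstimate2} with the base matrix chosen inside the ball, each such coset meets $B_F(R)$ in $O(R)$ elements, uniformly in $(p,q)$. When $F$ is not the standard generating set, compare generating sets as in Lemma~\ref{Lem: Neg}. If $D=0$, induct on $k$.

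This bounds the proportion of bad $k$-tuples by $k\cdot O(R)/\sharp B_F(R)$ for every $k$-component link. Since $k\le n$ and $\sharp B_F(R)$ grows exponentially, you can then choose $R_n$ and $\alpha(n)$ exactly as in your Step~2.
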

\begin{proof}
    For every $n\ge 3$, fix a finite generating set $F_n$ of $B_n$ and a finite generating set $F$ of $\sl(2,\Z)$. By choosing $\epsilon=\frac{1}{n}$, the proof of Corollary \ref{Cor: GenQHomSph} shows that there exists $\alpha(n)\in \N$ such that $$\frac{1}{\sharp B_{F_n}(n)}\sum_{k=1}^n\frac{\splitfrac{\sharp\{f\in B_{F_n}(n),\phi_1,\cdots,\phi_k\in B_{F}(\alpha(n)):\hat f \text{ is a link with }k \text{ components, }}{ (\xi(\phi_1),\cdots,\xi(\phi_k))\text{-Dehn filling of } S^3\setminus N(\hat f) \text{ gives a } \Q\text{-homology sphere}\}}}{\sharp B_{F}(\alpha(n))^k}>1-\frac{1}{n}.$$
    By letting $n\to \infty$ and requiring $\alpha(n)\ge \max\{\alpha(n-1),n\}$, we complete the proof.
\end{proof}

\subsection{Genericity of non-$\Z$-homology spheres}

Next, we are going to show that a generic closed orientable 3-manifold is not a $\Z$-homology sphere.

At first, it easily follows from Proposition \ref{Prop: 1stZHom} that
\begin{lemma}\label{Lem: p=1}
    Let $L\subset S^3$ be a link with $k$ components and $M$ its complement. Let $\phi_i=\begin{pmatrix}
    r_i & s_i\\ p_i & q_i
\end{pmatrix}\in \sl(2,\Z)$ for $1\le i\le k$. Then the $(\xi(\phi_1),\cdots,\xi(\phi_k))$-Dehn filling of $M$ along $\partial M$ gives a $\Z$-homology sphere if and only if $p_i=\pm 1$ for each $1\le i\le k$.
\end{lemma}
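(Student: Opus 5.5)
The plan is to compute $H_1$ of the filled manifold directly from Proposition \ref{Prop: 1stZHom}, exactly as in the knot case, and then read off when the resulting finite abelian group is trivial. Write $M=S^3\setminus N(L)$ with components $L_1,\dots,L_k$, and fix on each boundary torus $T_i$ a meridian $m_i$ and a longitude $l_i$. By Proposition \ref{Prop: 1stZHom}, $H_1(M,\Z)\cong\Z^k$ is freely generated by $[m_1],\dots,[m_k]$. Let $\bar M$ be the $(\xi(\phi_1),\cdots,\xi(\phi_k))$-Dehn filling. Attaching the $i$-th solid torus amounts to attaching a $2$-handle along $p_im_i+q_il_i$ and then a $3$-cell, so Mayer--Vietoris (or van Kampen followed by abelianization) gives
$$H_1(\bar M,\Z)\cong \Z^k\big/\big\langle\, p_i[m_i]+q_i[l_i] : 1\le i\le k\,\big\rangle .$$

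The next step is to express $[l_i]$ in terms of the meridians. Here I follow the convention the paper already uses in the knot case: the longitude on each boundary torus is the preferred (Seifert) longitude, so its only homological contribution comes from linking with the other components. I would first treat the case where this contribution vanishes, i.e.\ $[l_i]=0$ in $H_1(M,\Z)$. This holds for knots, and for links whose components pairwise have linking number zero, since $[l_i]=\sum_{j\ne i}\mathrm{lk}(L_i,L_j)[m_j]$. Under this assumption the relation matrix is $\mathrm{diag}(p_1,\dots,p_k)$, and so
$$H_1(\bar M,\Z)\cong \bigoplus_{i=1}^k \Z/p_i\Z .$$
This group is trivial if and only if each $\Z/p_i\Z$ is trivial, that is, if and only if $p_i=\pm1$ for every $i$. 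That gives both directions of Lemma \ref{Lem: p=1} at once. Since $\bar M$ is closed and orientable, Poincar\'e duality and the universal coefficient theorem then show that $H_1(\bar M,\Z)=0$ is equivalent to $\bar M$ being a $\Z$-homology sphere, which completes the argument in this case.

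The main obstacle is the off-diagonal linking terms. In general the relation matrix $A$ has entries $A_{ii}=p_i$ and $A_{ij}=q_i\,\mathrm{lk}(L_i,L_j)$ for $j\ne i$, and $\bar M$ is a $\Z$-homology sphere if and only if $\det A=\pm1$. This coincides with the condition $p_i=\pm1$ for all $i$ only when the off-diagonal entries vanish. My plan is therefore to make the diagonal reduction explicit by stating the standing assumption that $[l_i]=0$ in $H_1(M,\Z)$ for every $i$, as in the knot lemma. For a link with nonzero linking numbers this assumption fails, and then the equivalence in the lemma as stated does not hold in general: the correct criterion in that case is $\det A=\pm1$.
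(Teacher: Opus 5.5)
Your analysis is correct, and your computation is the one the paper has in mind when it says the lemma ``easily follows from Proposition~\ref{Prop: 1stZHom}''. $H_1$ of the filling is $\Z^k$ modulo the classes $p_i[m_i]+q_i[l_i]$, and reading this off as $\bigoplus_i\Z/p_i\Z$ silently uses $[l_i]=0$ in $H_1(M,\Z)$. The paper never states that hypothesis; you do. Your diagnosis of the general case is also right: since $[l_i]=\sum_{j\ne i}\mathrm{lk}(L_i,L_j)[m_j]$, the true criterion is $\det A=\pm1$. So the lemma as stated is false for links with a nonzero linking number, and restricting to the linking-number-zero case is a necessary correction of the statement, not a gap in your argument.

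You can make this airtight with an explicit witness. Take the Hopf link, with $\mathrm{lk}(L_1,L_2)=\epsilon=\pm1$.
\begin{itemize}
\item The choice $\phi_1=\phi_2=\begin{pmatrix}1&0\\1&1\end{pmatrix}$ has $p_1=p_2=1$. But $A=\begin{pmatrix}1&\epsilon\\ \epsilon&1\end{pmatrix}$ has determinant $0$, so $H_1\cong\Z$ and the filling is $S^1\times S^2$.
\item The choice $\phi_1=\phi_2=\begin{pmatrix}1&0\\0&1\end{pmatrix}$ has $p_1=p_2=0$. Yet the filling ($0$-surgery on both components) is $S^3$.
\end{itemize}
More generally, suppose $\mathrm{lk}(L_1,L_2)=\ell\neq0$, and put slope $1/0$ on all other components. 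Slopes $1/1$ on $L_1$ and $L_2$ give $\det A=1-\ell^2\neq\pm1$. Slopes $(1+\ell^2)/1$ and $1/1$ give $\det A=1$ with $p_1\neq\pm1$. Hence the lemma holds for a given $L$ and all choices of $\phi_i$ exactly when all pairwise linking numbers of $L$ vanish. This is the precise form of your remark about the off-diagonal entries.

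The same tacit assumption also breaks Lemma~\ref{Lem: AnosovImpQHomSph} for links. The Anosov matrix $\begin{pmatrix}2&1\\1&1\end{pmatrix}$ placed on both components of the Hopf link gives the slopes $1/1$ again, hence $H_1\cong\Z$.
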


\begin{fact}\label{Fac2}
    \begin{enumerate}
        \item\cite[\textsection 3.1.1]{FM12} $\sl(2,\Z)$ is generated by 
        $$\begin{pmatrix}
    1 & 0\\ 1 & 1
\end{pmatrix} \text{ and } 
\begin{pmatrix}
    1 & 1\\ 0 & 1
\end{pmatrix}.$$
    \item\cite[Example 4.4.1, Proposition 4.4.2]{Loh17} Let $G\le \sl(2,\Z)$ be a subgroup generated by 
    $$\begin{pmatrix}
    1 & 0\\ 2 & 1
\end{pmatrix} \text{ and } 
\begin{pmatrix}
    1 & 2\\ 0 & 1
\end{pmatrix}.$$
    Then $G\cong \F_2$ and $[\sl(2,\Z):G]=12$. 
    \end{enumerate}
\end{fact}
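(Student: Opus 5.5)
The statement collects two standard facts about $\sl(2,\Z)$, and I would prove both by hand, using row reduction and the ping-pong lemma. Write $A=\begin{pmatrix}1&0\\1&1\end{pmatrix}$ and $B=\begin{pmatrix}1&1\\0&1\end{pmatrix}$. Left multiplication by $B^k$ adds $k$ times the second row to the first, and left multiplication by $A^k$ adds $k$ times the first row to the second.

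For (1), take $\phi=\begin{pmatrix}r&s\\p&q\end{pmatrix}\in\sl(2,\Z)$ and run the Euclidean algorithm on its first column $(r,p)^T$ using left multiplications by powers of $A$ and $B$. Each step strictly decreases $|r|+|p|$ until one entry vanishes. Since $\gcd(r,p)=1$, the column then becomes $(\pm1,0)^T$ or $(0,\pm1)^T$. In the second case one more application of $B^{\pm1}$ reduces it to the first, and a further application of $A^{\pm1}$ reduces $(\pm1,0)^T$ to $(0,\pm1)^T$ if needed. We end with an upper triangular matrix $\pm\begin{pmatrix}1&m\\0&1\end{pmatrix}=\pm B^{\pm m}$. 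It remains to note that $-I\in\langle A,B\rangle$, which follows from $(AB^{-1}A)^2=-I$ by a direct check. Hence $\phi\in\langle A,B\rangle$.

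For (2), let $a=A^2$ and $b=B^2$, the generators of $G$. Freeness comes from the ping-pong lemma for the linear action on $\mathbb R^2\setminus\{0\}$. Set $X=\{|x|>|y|\}$ and $Y=\{|x|<|y|\}$. For $k\neq0$, $b^k(x,y)=(x+2ky,y)$ sends $Y$ into $X$, since $|x+2ky|\ge 2|k||y|-|x|>|y|$. Symmetrically, $a^k$ sends $X$ into $Y$. Taking a point such as $(1,0)$ or a generic point in $X$, the standard ping-pong argument shows that no nontrivial reduced word in $a^{\pm1},b^{\pm1}$ is the identity. Hence $G\cong\F_2$.

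For the index, consider the level-2 congruence subgroup $\Gamma(2)=\ker(\sl(2,\Z)\to\sl(2,\Z/2))$. The reduction map is surjective by (1), since $A$ and $B$ map onto generators of $\sl(2,\Z/2)\cong S_3$, a group of order $6$. So $[\sl(2,\Z):\Gamma(2)]=6$. Clearly $G\le\Gamma(2)$.

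The main step is to show $\Gamma(2)=\{\pm I\}\cdot G$. I would repeat the Euclidean reduction from (1), now allowing only left multiplication by $a^k$ and $b^k$, which add even multiples of one row to the other. For $\phi\in\Gamma(2)$ we have $r$ odd and $p$ even, and this parity pattern is preserved by $a$ and $b$. Because of the parity mismatch, one can always replace $r$ by $r-2kp$ with $|r-2kp|<|p|$ when $p\neq0$, or $p$ by $p-2kr$ with $|p-2kr|<|r|$. Strictness holds because an odd number cannot equal $\pm p$ with $p$ even, and likewise for the other entry. So $|r|+|p|$ strictly drops until $p=0$. This leaves $\pm\begin{pmatrix}1&2m\\0&1\end{pmatrix}=\pm b^m$, so $\Gamma(2)\subset\{\pm I\}\cdot G$.

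Finally, $-I\notin G$, because $-I$ has order $2$ and a free group is torsion-free. Hence $[\Gamma(2):G]=2$, and therefore $[\sl(2,\Z):G]=6\cdot2=12$.
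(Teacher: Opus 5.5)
\paragraph{Comparison with the paper.} The paper gives no proof of this Fact; it only cites Farb--Margalit and L\"oh. Your argument is a correct, self-contained substitute, built from the standard ingredients:
\begin{itemize}
\item Euclidean reduction of the first column for (1);
\item the classical ping-pong on the cones $\{|x|>|y|\}$ and $\{|x|<|y|\}$ for the freeness of $G$;
\item a parity-restricted Euclidean algorithm giving $\Gamma(2)=\{\pm I\}\cdot G$. Together with $[\sl(2,\Z):\Gamma(2)]=|\sl(2,\Z/2)|=6$ and $-I\notin G$, this yields the index $12$.
\end{itemize}
The citation is shorter. Your route makes the paper independent of the references and proves slightly more than is stated: $\Gamma(2)=\{\pm I\}\times G$, so $G$ maps isomorphically onto the image of $\Gamma(2)$ in $\mathrm{PSL}(2,\Z)$. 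The strict-decrease step in (2) is handled correctly. Since $r$ is odd and $p$ is even, the reduced entry $r-2kp$ can never equal $\pm p$, and $p-2kr$ can never equal $\pm r$.

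\paragraph{A sentence to fix in (1).} As written, the last step of your reduction is wrong. $A^{\pm1}$ only changes the second entry of the column, so it cannot take $(\pm1,0)^T$ to $(0,\pm1)^T$. In any case you want to end at $(\pm1,0)^T$, not $(0,\pm1)^T$. The correct finish from $(0,\varepsilon)^T$ is:
\begin{itemize}
\item apply $B^{\varepsilon}$, which gives $(1,\varepsilon)^T$;
\item then apply $A^{-\varepsilon}$, which gives $(1,0)^T$.
\end{itemize}

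\paragraph{Two points to make explicit in (2).}
\begin{itemize}
\item The final upper-right entry is even, because left multiplication by powers of $A^2$ and $B^2$ keeps the matrix in $\Gamma(2)$. So the end matrix really is $\pm (B^2)^m$.
\item Ping-pong with a base point in $X$ directly shows that reduced words beginning and ending with a power of $A^2$ are nontrivial. The remaining reduced words are brought to this form by conjugation.
\end{itemize}
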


The above fact shows that $\sl(2,\Z)$ is a 2-generated virtually free group. In this subsection, we fix a standard generating set $F$ given by Fact \ref{Fac2} (1) of $\sl(2,\Z)$. Denote $|\cdot |_F$ as the word norm on $\sl(2,\Z)$ with respect to $F$. 

\begin{lemma}\label{Lem: NormEstimate}
    There exist two constants $a_1,a_2>0$ such that for any $r,q\in \Z$, we have $$|\begin{pmatrix}
    r+1 & rq+r+q\\ 1 & q+1
\end{pmatrix}|_F\ge a_1(|r|+|q|)-a_2.$$
\end{lemma}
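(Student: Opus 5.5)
We would prove this by writing the matrix as an explicit short word and then measuring its length in the free product structure of $\mathrm{PSL}(2,\Z)$. Let $A=\begin{pmatrix}1&1\\0&1\end{pmatrix}$ and $B=\begin{pmatrix}1&0\\1&1\end{pmatrix}$ be the generators from Fact \ref{Fac2}(1). A direct multiplication gives
$$A^rBA^q=\begin{pmatrix}1+r&r\\1&1\end{pmatrix}\begin{pmatrix}1&q\\0&1\end{pmatrix}=\begin{pmatrix}r+1&rq+r+q\\1&q+1\end{pmatrix}.$$
So the matrix in the statement is exactly $A^rBA^q$. The task is therefore to show that this word cannot be shortened by more than a linear amount.

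Entry size cannot help here. The entries grow like $|rq|$, while the word norm is only comparable to the logarithm of the entries along some directions. So we use a combinatorial length instead. We pass to the quotient $\mathrm{PSL}(2,\Z)=\sl(2,\Z)/\{\pm I\}\cong \Z/2*\Z/3$, with $S$ of order $2$ and $U$ of order $3$, normalized so that $A=SU$ in $\mathrm{PSL}(2,\Z)$. For a word in a free product, let $\ell$ denote its syllable length in reduced normal form.

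The projection does not increase word length when we use the image generating set. Each of the finitely many generators has bounded syllable length $c$, and $\ell$ is subadditive. Hence $\ell(g)\le c\,|g|_F$ for all $g$. It therefore suffices to show $\ell(A^rBA^q)\ge 2(|r|+|q|)-C$ for some constant $C$.

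Next we express $B$ in these letters. Up to sign, $B=SA^{-1}S^{-1}$, which equals $S(U^{-1}S)S=SU^{-1}$ in $\mathrm{PSL}(2,\Z)$. Thus
$$A^rBA^q=(SU)^r\,S\,U^{-1}\,(SU)^q \quad\text{in } \mathrm{PSL}(2,\Z).$$
For $r>0$ the power $(SU)^r$ is reduced of length $2r$. For $r<0$ it equals $(U^{-1}S)^{|r|}$, again of length $2|r|$. The same holds for $q$.

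The main step is a case check on the signs of $r$ and $q$, to confirm that cancellation at the two junctions is bounded.

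\emph{Left junction.} If $r>0$, the word $\ldots U\cdot S\cdot U^{-1}$ is already reduced. If $r<0$, the trailing $S$ of $(U^{-1}S)^{|r|}$ cancels the middle $S$, and $U^{-1}U^{-1}=U$ merges into one syllable. After that nothing further cancels.

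\emph{Right junction.} If $q>0$, the word $U^{-1}\cdot S\ldots$ is reduced. If $q<0$, the leading $U^{-1}$ of $(U^{-1}S)^{|q|}$ merges with the middle $U^{-1}$ into $U$, and the word stops reducing there.

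In every case only a bounded number of syllables (at most about $4$) is lost. This gives $\ell\ge 2|r|+2|q|-C$, and hence
$$|A^rBA^q|_F\ge \tfrac{2}{c}(|r|+|q|)-\tfrac{C}{c}.$$
This is the statement with $a_1=2/c$ and $a_2=C/c$.

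I expect the main risk to be bookkeeping rather than substance: fixing the correct identification of $A$ and $B$ with words in $S$ and $U$, including signs, and checking the degenerate small cases such as $r=0$ or $q=0$. Those cases only affect the additive constant.
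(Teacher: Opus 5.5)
Your proof is correct and takes a genuinely different route from the paper's.

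\textbf{The paper's route.} The paper works in the index-$12$ free subgroup $G=\langle\begin{pmatrix}1&0\\2&1\end{pmatrix},\begin{pmatrix}1&2\\0&1\end{pmatrix}\rangle$. It uses the quasi-isometry $G\hookrightarrow\sl(2,\Z)$ in its nontrivial direction, $|A|_F\ge\frac1a|A|_S-a$, with an inexplicit constant $a$. Since the given matrix is not in $G$, the paper replaces it by an element $A'\in G$ at bounded distance. It uses the auxiliary element $\begin{pmatrix}5&-4\\4&-3\end{pmatrix}$ to absorb $q$ in steps of $4$, and then reads off the free-group length of $A'$.

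\textbf{Your route.} You write the matrix exactly as $(SU)^rSU^{-1}(SU)^q$ in $\mathrm{PSL}(2,\Z)\cong\Z/2*\Z/3$. You then use syllable length, which is subadditive and hence $2$-Lipschitz with respect to $|\cdot|_F$. What this buys you:
\begin{itemize}
\item only the trivial direction of comparison is needed;
\item there is no approximation step;
\item the constants are explicit: $a_1=1$, which is sharp, since $|A^rBA^q|_F\le|r|+|q|+1$.
\end{itemize}
The cost is reliance on the normal form theorem for free products and passing to the quotient by $\pm I$. That quotient is harmless, because you only need a lower bound.

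\textbf{A bookkeeping error.} Your two junctions are not independent. When $r<0$ and $q<0$, the middle $U^{-1}$ has already been absorbed into $U$ at the left junction. The right junction then gives $U\cdot U^{-1}=1$, then $S\cdot S=1$, then $U^{-1}U^{-1}=U$. The reduced form is $(U^{-1}S)^{|r|-2}\,US\,(U^{-1}S)^{|q|-2}$, of length $2|r|+2|q|-6$ for $|r|,|q|\ge 2$. For instance, $r=q=-1$ gives just $S$. So eight syllables are lost, not ``about four'', and your sentence ``the word stops reducing there'' is false in this case. The loss is still bounded, so your estimate survives.

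To make the argument airtight, write out the reduced form in each sign case rather than treating the junctions separately. The other three cases are:
\begin{itemize}
\item $(SU)^rSU^{-1}(SU)^q$ for $r,q>0$;
\item $(SU)^rSUS(U^{-1}S)^{|q|-1}$ for $r>0>q$;
\item $(U^{-1}S)^{|r|-1}U(SU)^q$ for $r<0<q$.
\end{itemize}
The remaining small values of $|r|$ or $|q|$ only change the additive constant.
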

\begin{proof}
    Denote $S=\{\begin{pmatrix}
    1 & 0\\ 2 & 1
\end{pmatrix},
\begin{pmatrix}
    1 & 2\\ 0 & 1
\end{pmatrix}\}\subset B_F(2)$ and $G=\langle S\rangle\cong \F_2$ by Fact \ref{Fac2} (2). Let $\iota: G\to \sl(2,\Z)$ be the natural embedding. Since $G$ is a finite-index subgroup of $\sl(2,\Z)$, $\iota$ induces a quasi-isometry between two Cayley graphs $\G(G,S)$ and $\G(\sl(2,\Z),F)$. This shows that there exists a constant $a\ge 1$ such that $$\frac{1}{a}|A|_{S}-a\le |A|_F\le 2|A|_{S}$$ for any $A\in G$.

    Now, let us consider the matrix $A=\begin{pmatrix}
    r+1 & rq+r+q\\ 1 & q+1
\end{pmatrix}\in \sl(2,\Z)$. It is straightforward to compute that $A=\begin{pmatrix}
    1 & 1\\ 0 & 1
\end{pmatrix}^r\begin{pmatrix}
    1 & 0\\ 1 & 1
\end{pmatrix}\begin{pmatrix}
    1 & 1\\ 0 & 1
\end{pmatrix}^q=A_1A_2$ where $A_1=\begin{pmatrix}
    1 & 1\\ 0 & 1
\end{pmatrix}^r$ and $A_2=\begin{pmatrix}
    1 & 0\\ 1 & 1
\end{pmatrix}\begin{pmatrix}
    1 & 1\\ 0 & 1
\end{pmatrix}^q=\begin{pmatrix}
    1 & q\\ 1 & q+1
\end{pmatrix}$. Next, we are going to find a matrix $A'\in G$ such that $|A'|_F$ is approximately equal to $|A|_F$.

    Note that $A_1$ can be reformulated as $\begin{pmatrix}
    1 & 2\\ 0 & 1
\end{pmatrix}^{\frac{r}{2}}$ when $r$ is even or $\begin{pmatrix}
    1 & 1\\ 0 & 1
\end{pmatrix}\begin{pmatrix}
    1 & 2\\ 0 & 1
\end{pmatrix}^{\frac{r-1}{2}}$ when $r$ is odd. Denote $$B=\begin{pmatrix}
    1 & 2\\ 0 & 1
\end{pmatrix}\begin{pmatrix}
    1 & 0\\ -2 & 1
\end{pmatrix}\begin{pmatrix}
    1 & 2\\ 0 & 1
\end{pmatrix}\begin{pmatrix}
    1 & 0\\ -2 & 1
\end{pmatrix}=\begin{pmatrix}
    5 & -4\\ 4 & -3
\end{pmatrix}\in B_{S}(4)$$ and $q=4m+i$ for some $m\in \Z, i\in \{0,1,2,3\}$. An important observation is that $A_2=\begin{pmatrix}
    1 & q\\ 1 & q+1
\end{pmatrix}$, $BA_2=\begin{pmatrix}
    1 & q-4\\ 1 & q-3
\end{pmatrix}$ and then $$B^mA_2=\begin{pmatrix}
    1 & q-4m\\ 1 & q-4m+1
\end{pmatrix}=\begin{pmatrix}
    1 & i\\ 1 & i+1
\end{pmatrix}=\begin{pmatrix}
    1 & 0\\ 1 & 1
\end{pmatrix}\begin{pmatrix}
    1 & 1\\ 0 & 1
\end{pmatrix}^i\in B_F(i+1)\subseteq B_F(4).$$ The above two reformulations, together with the triangle inequality, show that there exists a matrix $A'\in G$ such that $|A|_F\ge |A'|_F-5$. Indeed, $A'$ can be explicitly expressed as $\begin{pmatrix}
    1 & 2\\ 0 & 1
\end{pmatrix}^{\frac{r}{2}}B^{-m}$ when $r$ is even or $\begin{pmatrix}
    1 & 2\\ 0 & 1
\end{pmatrix}^{\frac{r-1}{2}}B^{-m}$ when $r$ is odd. Since $S$ is a free basis of $G$, it is easy to see that $|A'|_{S}\ge \frac{|r|}{2}+|q|-6$. Therefore, one gets that $$|A|_F\ge |A'|_F-5\ge \frac{1}{a}|A'|_{S}-a-5\ge \frac{1}{a}(\frac{|r|}{2}+|q|-6)-a-5.$$
Letting $a_1:=\frac{1}{2a}$ and $a_2=5+a+\frac{6}{a}$ completes the proof.

\end{proof}

Denote $$\z=\left\{\begin{pmatrix}
    r+1 & rq+r+q\\ 1 & q+1
\end{pmatrix}:r,q\in \Z\right\}\subset \sl(2,\Z).$$

\begin{lemma}\label{Lem: Neg}
    $\z$ is negligible in $\sl(2,\Z)$, i.e. 
    \begin{equation}\label{Equ1}
        \lim_{R\to \infty}\frac{\sharp(B_{F'}(R)\cap \z)}{\sharp B_{F'}(R)}= 0
    \end{equation}
    for any finite generating set $F'$ of $\sl(2,\Z)$.
\end{lemma}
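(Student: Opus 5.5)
Our plan is to compare a polynomial upper bound on $\sharp(B_{F'}(R)\cap \z)$ with the exponential growth of $\sharp B_{F'}(R)$, using Lemma \ref{Lem: NormEstimate}.

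First, we reduce to the standard generating set $F$ of Fact \ref{Fac2} (1). For two finite generating sets $F,F'$ there is a constant $c\ge 1$ with $|g|_F\le c|g|_{F'}$ for all $g$, so $B_{F'}(R)\subseteq B_F(cR)$. Hence
\[\sharp(B_{F'}(R)\cap \z)\le \sharp(B_F(cR)\cap \z).\]

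Next, we bound $\sharp(B_F(R)\cap\z)$ polynomially. The map $(r,q)\mapsto \begin{pmatrix} r+1 & rq+r+q\\ 1 & q+1\end{pmatrix}$ is injective, since $r$ and $q$ are recovered from the diagonal entries. By Lemma \ref{Lem: NormEstimate}, if this matrix lies in $B_F(R)$ then $|r|+|q|\le (R+a_2)/a_1$. The number of such pairs $(r,q)$ is at most $(2(R+a_2)/a_1+1)^2$. Therefore
\[\sharp(B_{F'}(R)\cap\z)\le C(R+1)^2\]
for a constant $C$ depending only on $F'$.

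Finally, we bound the denominator from below exponentially. By Fact \ref{Fac2} (2), $G\cong\F_2$ sits inside $\sl(2,\Z)$ with generators in $B_F(2)$. Hence $B_F(2R)\supseteq B_S(R)\cap G$, which has $2\cdot 3^R-1$ elements. Consequently $B_{F'}(R)\supseteq B_F(R/c')$ grows at least like $\lambda^R$ for some $\lambda>1$. (Alternatively, one can cite that a non-elementary hyperbolic group has exponential growth.) The ratio is then at most $C(R+1)^2/\lambda^R\to 0$, which gives \eqref{Equ1}.

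The only substantive input is the linear lower bound on word length from Lemma \ref{Lem: NormEstimate}; once that is available the argument is routine bookkeeping. The main care needed is to make the constants in the change of generating sets explicit, so that both the polynomial and the exponential bounds hold uniformly for the given $F'$.
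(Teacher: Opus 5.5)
Your proposal is correct and follows essentially the same route as the paper. You get a quadratic upper bound on $\sharp(B_F(R)\cap\z)$ from Lemma \ref{Lem: NormEstimate} together with injectivity of $(r,q)\mapsto A$, an exponential lower bound from the index-$12$ free subgroup $G$ whose basis lies in $B_F(2)$, and you pass to an arbitrary $F'$ via the inclusion $B_{F'}(R)\subseteq B_F(cR)$.

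The one minor difference is how you get exponential growth of $\sharp B_{F'}(R)$. You use the reverse comparison $B_F(R/c')\subseteq B_{F'}(R)$, whereas the paper cites the Shalen--Wagreich bound. Your route is more elementary, and it suffices because this lemma needs no constant that is uniform in $F'$.
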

\begin{proof}
    We first show that Formula (\ref{Equ1}) holds for the standard generating set $F$. By Lemma \ref{Lem: NormEstimate}, for a matrix $A=\begin{pmatrix}
    r+1 & rq+r+q\\ 1 & q+1
\end{pmatrix}\in B_F(R)\cap \z$, there are at most $\frac{2(R+a_2)}{a_1}+1$ choices for each of $r$ and $q$. Hence, $\sharp(B_F(R)\cap \z)\le (\frac{2(R+a_2)}{a_1}+1)^2$.

As shown in the proof of Lemma \ref{Lem: NormEstimate}, $\sl(2,\Z)$ has a free subgroup $G$ of index 12 and $B_F(2)$ contains a free basis of $G$. Hence, $\sharp B_F(R)\ge 3^{\lfloor R/2\rfloor}$.

Therefore, we have $$\frac{\sharp(B_{F}(R)\cap \z)}{\sharp B_{F}(R)}\le \frac{(\frac{2(R+a_2)}{a_1}+1)^2}{3^{\lfloor R/2\rfloor}}\to 0$$ as $R\to \infty$.

Now, we assume that $F'$ is an arbitrary finite generating set of $\sl(2,\Z)$. Since $[\sl(2,\Z): G]=12$, the well-known Shalen-Wagreich's Proposition (cf. \cite[Proposition 3.3]{SW92}) shows that $B_{F'}(23)$ contains a free basis of $G$. Hence, $\sharp B_{F'}(R)\ge 3^{\lfloor R/23\rfloor}$. Let $k:=\max_{A\in F'}|A|_F$. Then $B_{F'}(R)\subseteq B_F(kR)$. This shows that $\sharp(B_{F'}(R)\cap \z)\le \sharp(B_{F}(kR)\cap \z)\le (\frac{2(kR+a_2)}{a_1}+1)^2$. Therefore, we have $$\frac{\sharp(B_{F'}(R)\cap \z)}{\sharp B_{F'}(R)}\le \frac{(\frac{2(kR+a_2)}{a_1}+1)^2}{3^{\lfloor R/23\rfloor}}\to 0$$ as $R\to \infty$.
\end{proof}

By combining Lemma \ref{Lem: p=1} and Lemma \ref{Lem: Neg}, we have

\begin{proposition}\label{Prop: GenNonZHomSph}
    Let $L\subset S^3$ be a link with $k$ components and $M$ its complement. A generic Dehn surgery on $L$ gives a non-$\Z$-homology sphere, i.e. $$\lim_{R\to \infty}\frac{\splitfrac{\sharp\{\phi_1,\cdots,\phi_k\in B_{F'}(R):}{ (\xi(\phi_1),\cdots,\xi(\phi_k))\text{-Dehn filling of } M \text{ along } \partial M \text{ gives a } \Z\text{-homology sphere}\}}}{\sharp B_{F'}(R)^k}= 0$$ for any finite generating set $F'$ of $\sl(2,\Z)$. 
\end{proposition}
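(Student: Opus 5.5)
The plan is to reduce the statement to a one-coordinate negligibility statement and then use Lemma \ref{Lem: Neg}. By Lemma \ref{Lem: p=1}, the $(\xi(\phi_1),\cdots,\xi(\phi_k))$-Dehn filling of $M$ is a $\Z$-homology sphere if and only if the lower-left entry $p_i$ of every $\phi_i$ equals $\pm 1$. Forgetting the conditions on $\phi_2,\dots,\phi_k$, the set being counted is contained in
$$\{(\phi_1,\cdots,\phi_k)\in B_{F'}(R)^k : p_1=\pm1\}.$$
So the ratio in the statement is at most $\sharp(B_{F'}(R)\cap \mathcal P)/\sharp B_{F'}(R)$, where $\mathcal P\subset \sl(2,\Z)$ is the set of matrices with lower-left entry $\pm1$. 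It therefore suffices to show that $\mathcal P$ is negligible.

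Next I identify $\mathcal P$ with $\z\cup(-\z)$. Let $\begin{pmatrix} r' & s'\\ 1 & q'\end{pmatrix}\in\sl(2,\Z)$. The determinant condition forces $s'=r'q'-1$. Setting $r=r'-1$ and $q=q'-1$ gives $rq+r+q=r'q'-1=s'$, so the matrix lies in $\z$. Conversely, every element of $\z$ has lower-left entry $1$. Hence $\z$ is exactly the set of matrices with $p=1$. Multiplying by $-I\in\sl(2,\Z)$ shows that the set of matrices with $p=-1$ is exactly $-\z$.

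The main (mild) obstacle is handling $-\z$, since Lemma \ref{Lem: Neg} only treats $\z$. Let $c=|-I|_{F'}$. If $-A\in B_{F'}(R)$, then $A\in B_{F'}(R+c)$. So $\sharp(B_{F'}(R)\cap(-\z))\le\sharp(B_{F'}(R+c)\cap\z)$, and this is polynomial in $R$ by the bound in the proof of Lemma \ref{Lem: Neg}: at most $(\frac{2(k(R+c)+a_2)}{a_1}+1)^2$, where $k=\max_{A\in F'}|A|_F$ is the constant from that proof. The denominator $\sharp B_{F'}(R)$ is at least $3^{\lfloor R/23\rfloor}$, again by that proof. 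Hence $\sharp(B_{F'}(R)\cap\mathcal P)/\sharp B_{F'}(R)$ is bounded by a polynomial in $R$ divided by an exponential in $R$, and so tends to $0$.

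Combining this with the reduction in the first step, the ratio in the statement tends to $0$ for every finite generating set $F'$ of $\sl(2,\Z)$.
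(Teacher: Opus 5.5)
Your argument is the paper's own: the paper proves the proposition in one line by combining Lemma~\ref{Lem: p=1} with Lemma~\ref{Lem: Neg}, and you do the same. You also check that $\z$ is exactly the set of matrices with lower-left entry $1$, and you handle the entry $-1$ via $-\z$ and the constant $|-I|_{F'}$. The paper silently skips this second case, since Lemma~\ref{Lem: Neg} only treats $\z$.

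\textbf{The gap.} The trouble is the ingredient you share with the paper. Lemma~\ref{Lem: p=1} is false as soon as two components of $L$ have nonzero linking number, and with it the containment in your first step fails.
\begin{itemize}
\item In $H_1(M)$ the longitude of $L_i$ is not zero: $[l_i]=\sum_{j\neq i}\mathrm{lk}(L_i,L_j)[m_j]$. Changing the choice of longitude on $T_i$ only alters this by multiples of $[m_i]$, so it cannot be fixed by a convention.
\item Hence $H_1$ of the filled manifold is presented by the $k\times k$ matrix $A$ with $A_{ii}=p_i$ and $A_{ij}=q_i\,\mathrm{lk}(L_i,L_j)$ for $j\neq i$. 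The filling is a $\Z$-homology sphere iff $\det A=\pm1$.
\item Counterexample: for the Hopf link, $\phi_1=\phi_2=I$ (both slopes $0$) kills $l_1$ and $l_2$, whose classes are $\pm[m_2]$ and $\pm[m_1]$. So $H_1=0$ (the result is $S^3$), although $p_1=p_2=0$.
\end{itemize}
So, as written, your proof (like the paper's) only establishes the statement for links with vanishing pairwise linking numbers, e.g.\ knots.

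\textbf{A possible repair.} The idea can be saved as follows.
\begin{itemize}
\item Fix $\phi_1,\dots,\phi_{k-1}$ and expand $\det A$ along the $k$-th row. This gives $\det A=C_1p_k+C_2q_k$, with $C_1,C_2$ depending only on $\phi_1,\dots,\phi_{k-1}$.
\item If this can equal $\pm1$, then $(C_1,C_2)$ is primitive. Since $(x\gamma^{-1})_{21}=x_{21}\gamma_{22}-x_{22}\gamma_{21}$, the admissible $\phi_k$ form $\z\gamma\cup(-\z)\gamma$, where $\gamma\in\sl(2,\Z)$ is any matrix with bottom row $(-C_2,C_1)$.
\item It then suffices to have $\sharp(B_{F'}(R)\cap\z\gamma)\le P(R)$ for a polynomial $P$ independent of $\gamma$. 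The ratio in the statement is then at most $2P(R)/\sharp B_{F'}(R)\to 0$.
\end{itemize}
Your $-I$ trick does not give this: it only yields a bound polynomial in $R+|\gamma|_{F'}$. For $k\ge 3$, $\gamma$ can be forced to have word length exponential in $R$. For example, take linking numbers $\mathrm{lk}_{12}=0$, $\mathrm{lk}_{13}=\mathrm{lk}_{23}=1$ and bottom rows $(F_{n+1},F_n)$, $(F_n,-F_{n-1})$ (consecutive Fibonacci numbers). Then the required bottom row is $(\pm1,F_nF_{n+1})$, whose minimal word length is about $F_nF_{n+1}$, while $\phi_1,\phi_2$ have length $O(n)$.

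So a genuinely uniform estimate is needed. One way is through the action of $\sl(2,\Z)$ on the tree dual to the Farey tessellation:
\begin{itemize}
\item Matrices with a given bottom row form a coset $\langle U\rangle x_0$, with $U=\begin{pmatrix}1&1\\0&1\end{pmatrix}$. This coset meets $B_{F'}(R)$ in $O(R)$ elements because $\langle U\rangle$ is undistorted.
\item Only $O(R)$ Farey neighbours of the fixed cusp $\gamma^{-1}(\infty)$ are of the form $x^{-1}(\infty)$ with $x\in B_{F'}(R)$.
\end{itemize}
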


Similar to Theorem \ref{Thm: GenQHomSph}, one has that
\begin{theorem}\label{Thm: GenNonZHomSph}
    A generic closed orientable 3-manifold is not a $\Z$-homology sphere: for any finite generating set $F_n$ of $B_n(n\ge 3)$ and any finite generating set $F$ of $\sl(2,\Z)$, there exists an unbounded and monotonically increasing function $\alpha: \N_{\ge 3}\to \N_{\ge 3}$ such that 
    $$\lim_{n\to \infty}\frac{1}{\sharp B_{F_n}(n)}\sum_{k=1}^n\frac{\splitfrac{\sharp\{f\in B_{F_n}(n),\phi_1,\cdots,\phi_k\in B_{F}(\alpha(n)):\hat f \text{ is a link with }k \text{ components, }}{ (\xi(\phi_1),\cdots,\xi(\phi_k))\text{-Dehn filling of } S^3\setminus N(\hat f) \text{ gives a non-} \Z\text{-homology sphere}\}}}{\sharp B_{F}(\alpha(n))^k}= 1.$$
\end{theorem}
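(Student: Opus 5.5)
The plan mirrors the proof of Theorem \ref{Thm: GenQHomSph}, with Proposition \ref{Prop: GenNonZHomSph} taking the place of Proposition \ref{Prop: GenQHomSph}.

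First I will prove an analogue of Corollary \ref{Cor: GenQHomSph}. Fix $n\ge 3$, a finite generating set $F_n$ of $B_n$ and a finite generating set $F$ of $\sl(2,\Z)$. A closed $n$-braid has $k$ components for some $1\le k\le n$, so the numbers
$$\frac{\sharp\{f\in B_{F_n}(R):\hat f \text{ has } k \text{ components}\}}{\sharp B_{F_n}(R)}$$
sum to $1$ over $k=1,\dots,n$. By Lemma \ref{Lem: p=1}, a filling is a $\Z$-homology sphere exactly when every $\phi_i$ has lower-left entry $\pm 1$. This is a condition on the tuple $(\phi_1,\dots,\phi_k)$ alone and does not depend on the link $\hat f$.

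That independence lets us count uniformly in $f$. Proposition \ref{Prop: GenNonZHomSph} gives, for each $k\le n$ and each $\epsilon>0$, a radius $R_k$ such that for all $R\ge R_k$ the proportion of $k$-tuples in $B_F(R)^k$ yielding a $\Z$-homology sphere is less than $\epsilon$. Because this condition is the same for every $f$, the bound holds for all $f$ with $k$ components simultaneously.

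Now take $\epsilon=1/n$ and set $\alpha(n)\ge\max\{R_1,\dots,R_n,\alpha(n-1),n\}$. This choice makes $\alpha$ monotone and unbounded. Summing the estimates over $k$, weighted by the proportion of braids in $B_{F_n}(n)$ whose closures have $k$ components, shows that the normalized quantity in the statement is greater than $1-1/n$. Since it is also at most $1$, letting $n\to\infty$ gives the limit $1$.

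The one point needing care is the uniformity over links. The count in Proposition \ref{Prop: GenNonZHomSph} is stated for a fixed link $L$, while here $f$ varies. This is resolved by the remark above: Lemma \ref{Lem: p=1} shows the exceptional set of tuples is exactly $\{(\phi_1,\dots,\phi_k): p_i=\pm1 \ \forall i\}$. Its proportion is bounded by the proportion of $\phi_1$ lying in $\z$ or in its analogue with lower-left entry $-1$. That analogue is $-\z$, which is also negligible by the argument of Lemma \ref{Lem: Neg}, since $-I$ is central and has bounded word length. Negligibility of $\z\cup(-\z)$ then gives $R_k$ independent of the link. The rest is bookkeeping.
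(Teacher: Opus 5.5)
Your proposal is correct relative to the paper's earlier results and is essentially the paper's own argument: the analogue of Corollary~\ref{Cor: GenQHomSph} with Proposition~\ref{Prop: GenNonZHomSph} in place of Proposition~\ref{Prop: GenQHomSph}, then $\epsilon=1/n$ and $\alpha(n)\ge\max\{\alpha(n-1),n\}$ as in Theorem~\ref{Thm: GenQHomSph}; you are in fact more explicit than the paper on two points it leaves implicit, namely that the bad tuples do not depend on $\hat f$ (so the radii are uniform in $f$) and that the case $p_i=-1$ is the negligible set $-\z$. One caveat, which you inherit from the paper rather than introduce: Lemma~\ref{Lem: p=1} ignores linking numbers, because on the meridian basis $\mu_1,\dots,\mu_k$ of Proposition~\ref{Prop: 1stZHom} the $i$-th filling relation is really $p_i\mu_i+q_i\sum_{j\neq i}\mathrm{lk}(L_i,L_j)\mu_j=0$ for the components $L_1,\dots,L_k$ (e.g.\ $(2/1,1/1)$-surgery on the Hopf link gives $S^3$ although $p_1=2$), so this route is only fully justified when the pairwise linking numbers of $\hat f$ vanish.
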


\subsection{Genericity of hyperbolic links}

Recall that $B_n$ is the braid group with $n>3$ strands and each element $f$ in $B_n$ determines a link in $S^3$ which is the closure $\hat f$ of $f$. In \cite{Ma14}, Ma has showed that for a random walk $w_{n,k}$ of length $k$ on $B_n$, the probability that the closure $\widehat{w_{n,k}}$ is a hyperbolic link in $S^3$ converges to 1 as $k\to \infty$. 
For the sake of consistency, we use the counting measure on $B_n$ to re-declare that a generic braid in $B_n$ gives a hyperbolic link.

Let $D$ be a disk with $n>3$ punctures. The braid group $B_n$ can be seen as the mapping class group of $D$. Let $\mathscr C(D)$ denote the curve graph of $D$ and $d_{\mathscr C(D)}$ denote the combinatorial metric on $\mathscr C(D)$. As a result of \cite[Theorem 1.3]{Bow08}, $B_n$ admits an acylindrical action on $\mathscr C(D)$. For each element $f\in B_n$ and a basepoint $x\in \mathscr C(D)$, we denote $$\tau(f):=\lim_{m\to \infty}\frac{d_{\mathscr C(D)}(x,f^m(x))}{m}$$ as the \textit{stable translation length} of $f$ on $\mathscr C(D)$. Note that $\tau(f)$ does not depend on the choice of $x\in \mathscr C(D)$. The following result is a special case of \cite[Theorem 1.1]{Cho25b}.

\begin{lemma}\label{Lem: pA in groups}
    For any finite generating set $F$ of $B_n$ and any $C>0$, we have
    $$\lim_{R\to \infty}\frac{\sharp\{f\in B_F(R): f \text{ is pseudo-Anosov and satisfies } \tau(f)>C\}}{\sharp B_F(R)}= 1.$$
\end{lemma}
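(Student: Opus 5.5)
The lemma says it is a special case of \cite[Theorem 1.1]{Cho25b}, so my plan is to check that $B_n$ acting on $\mathscr C(D)$ satisfies that theorem's hypotheses and then translate its conclusion into the stated form. I expect \cite[Theorem 1.1]{Cho25b} to say the following: if a finitely generated group $G$ acts acylindrically and nonelementarily on a hyperbolic space $X$, then for any finite generating set, elements $g\in B_F(R)$ with stable translation length $\tau(g)\ge cR$ (for some $c>0$) form a generic subset. This should hold at least for groups with suitable growth or counting properties, such as mapping class groups and braid groups.

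First I verify the hypotheses. Since $n>3$, the curve graph $\mathscr C(D)$ of the $n$-punctured disk (a sphere with $n+1$ punctures) is connected, infinite-diameter and Gromov hyperbolic by Masur--Minsky. The action of $B_n$ factors through the mapping class group of the punctured sphere, with finite center acting trivially, and it is acylindrical by \cite[Theorem 1.3]{Bow08}. It is nonelementary because $B_n$ contains independent pseudo-Anosov elements, for instance two pseudo-Anosovs with distinct fixed points on $\partial\mathscr C(D)$. Applying \cite[Theorem 1.1]{Cho25b} to $G=B_n$ and $X=\mathscr C(D)$ then gives that $\{f\in B_F(R):\tau(f)\ge cR\}$ has proportion tending to $1$.

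Next I translate this into the lemma. Fix $C>0$. For $R>C/c$, every $f$ with $\tau(f)\ge cR$ satisfies $\tau(f)>C$. In particular $\tau(f)>0$, so $f$ is loxodromic on $\mathscr C(D)$. By the Masur--Minsky characterization (reducible and periodic classes have bounded orbits on the curve graph), loxodromic elements are exactly the pseudo-Anosov ones. The set in the lemma therefore contains the generic set from the previous step, which proves the statement for every finite generating set $F$.

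The main obstacle is matching the exact form of the cited theorem. If \cite{Cho25b} only gives a positive lower bound on $\tau$ in a weaker density sense, or states it for one generating set only, I would first try to derive linear growth of translation length from genericity of "contracting" or "loxodromic with large translation" elements via the counting methods of \cite{GTT18}. Alternatively, I could use the fact that $\tau(f)\le C$ together with acylindricity bounds the number of conjugacy-type configurations, which makes the set $\{\tau\le C\}$ negligible. Either route gives the stated limit, since the conclusion only requires $\tau(f)>C$ for a fixed $C$.
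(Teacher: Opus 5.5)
Your main line is the paper's. The paper gives no argument beyond calling the lemma a special case of \cite[Theorem 1.1]{Cho25b} for the action of $B_n$ on $\mathscr C(D)$, with acylindricity attributed to \cite[Theorem 1.3]{Bow08}. Your remark that $\tau(f)>C>0$ forces $f$ to be loxodromic, hence pseudo-Anosov, is the step the paper leaves implicit.

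One claim in your hypothesis check is false and needs repair. The center of $B_n$ is not finite: it is infinite cyclic, generated by the full twist $\Delta^2$ (the Dehn twist about $\partial D$), and it acts trivially on $\mathscr C(D)$. So $B_n$ itself does not act acylindrically, and has no WPD elements, since infinitely many elements move every vertex by $0$. Bowditch's theorem applies only to $\overline{B}_n=B_n/\langle\Delta^2\rangle$, a finite-index subgroup of $\mcg(S_{0,n+1})$. The paper's sentence asserting an acylindrical action of $B_n$ has the same imprecision.

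Unless the cited theorem explicitly allows such a kernel, apply it to $\overline{B}_n$ with generating set $\pi(F)$ and pull back:
\begin{itemize}
\item Being pseudo-Anosov and the value of $\tau$ depend only on $\pi(f)$.
\item $\pi(B_F(R))=B_{\pi(F)}(R)$.
\item The center is undistorted, because the exponent-sum map $e\colon B_n\to\Z$ sends $\Delta^2$ to $n(n-1)\neq 0$. Hence each coset $f\langle\Delta^2\rangle$ meets $B_F(R)$ in at most $2MR/(n(n-1))+1$ elements, where $M=\max_{s\in F}|e(s)|$.
\end{itemize}
It follows that the bad part of $B_F(R)$ is at most $O(R)$ times the bad part of $B_{\pi(F)}(R)$, while $\sharp B_{\pi(F)}(R)\le \sharp B_F(R)$. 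This suffices as soon as the quotient statement has error $o(1/R)$, which exponential-genericity results of this type provide.

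Neither of your fallbacks would work on its own. \cite{GTT18} requires $G$ to be hyperbolic, and $B_n\supset\Z^2$ is not. Acylindricity gives no count of elements with $\tau\le C$; these include the exponentially growing reducible subgroup $B_{n-1}$. So a genuine counting theorem such as Choi's is needed.
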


Fix a sufficiently large $C>0$. Let $f\in B_n$ be a pseudo-Anosov element with $\tau(f)>C$. By the hyperbolization theorem for fibered 3-manifolds, the mapping torus $M_f=D\times [0,1]/\{(z,0)\sim(f(z),1)\}$ is hyperbolic. Note that $S^3\setminus N(\hat f)$ can be seen as a Dehn filling of $M_f$ along ONE cusp. Let $\lambda$ denote the killed curve in $\partial M_f$. Since $\tau(f)$ is sufficiently large, the formula (2.3) in \cite{Ma14} shows that the normalized length of $\lambda$ is also sufficiently large. Then by \cite[Theorem 5.11]{HK08}, $S^3\setminus N(\hat f)$ is hyperbolic. In summary, we have

\begin{theorem}\label{Thm: HypLink}
    A generic braid in $B_n$ gives a hyperbolic link, i.e. 
    $$\lim_{R\to \infty}\frac{\sharp\{f\in B_F(R): \hat f \text{ is a hyperbolic link}\}}{\sharp B_F(R)}= 1$$
    for any finite generating set $F$ of $B_n$.
\end{theorem}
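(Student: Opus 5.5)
The plan is to show that the set of braids whose closures are hyperbolic contains a generic subset, and then take complements. The preceding discussion already isolates that subset: for a fixed, sufficiently large constant $C>0$, let $P_C\subset B_n$ be the set of pseudo-Anosov braids $f$ with $\tau(f)>C$. By Lemma \ref{Lem: pA in groups}, $P_C$ is generic in $B_n$ for every finite generating set $F$. It therefore suffices to show that $\hat f$ is hyperbolic for every $f\in P_C$. Then $\sharp\{f\in B_F(R):\hat f \text{ hyperbolic}\}\ge \sharp(B_F(R)\cap P_C)$, and since the ratio is bounded above by $1$, the limit equals $1$.

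To show $\hat f$ is hyperbolic for $f\in P_C$, I would proceed in three steps.

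\begin{enumerate}
\item Since $f$ is pseudo-Anosov, Thurston's hyperbolization theorem for fibered manifolds shows that the mapping torus $M_f$ of the $n$-punctured disk is a finite-volume hyperbolic manifold. Its cusps are one cusp for each orbit of punctures under $f$, together with one cusp coming from $\partial D\times S^1$.
\item The complement $S^3\setminus N(\hat f)$ is obtained from $M_f$ by Dehn filling the cusp coming from $\partial D$. The filling kills the slope $\lambda=\partial D\times\{pt\}$, which bounds the disk meeting the braid axis.
\item Ma's estimate (formula (2.3) in \cite{Ma14}) bounds the normalized length of $\lambda$ on a maximal cusp from below by a function of $\tau(f)$ that tends to infinity with $\tau(f)$. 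So once $C$ is large, the normalized length exceeds the threshold $7.515$ (or $10.1$) in Hodgson--Kerckhoff's theorem \cite[Theorem 5.11]{HK08}, which then gives that the filled manifold is hyperbolic.
\end{enumerate}

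The main obstacle is step 3, namely making the dependence on $C$ explicit and uniform. The cusp shape of $\partial D\times S^1$ in $M_f$ has to be controlled from the curve-complex translation length alone. I would follow Ma's argument. A large curve-complex translation length forces the fiber surface (or the disk) to wrap many times around the cusp, which makes the meridian $\lambda$ long relative to the area of the cusp torus. Crucially, the lower bound does not depend on $f$ beyond $\tau(f)$, so a single choice of $C$ works for the whole set $P_C$.

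A smaller point to check is that Hodgson--Kerckhoff applies when only one of several cusps is filled. It does: their theorem concerns filling a subset of cusps, with the normalized length computed over the filled slopes. Finally, the independence from the generating set $F$ is inherited directly from Lemma \ref{Lem: pA in groups}.
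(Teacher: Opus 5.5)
Your strategy is the paper's: genericity of pseudo-Anosov braids with $\tau(f)>C$ from Lemma~\ref{Lem: pA in groups}, hyperbolicity of the mapping torus $M_f$, then Ma's estimate (2.3) together with \cite[Theorem 5.11]{HK08} applied to the single filled cusp. However, step 2 names the wrong filling slope, and step 3 then fails for the slope you named.

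The curve $\partial D\times\{pt\}$ is the boundary of the fiber. Since $D$ is a spanning disk of the braid axis $A$ (pierced $n$ times by $\hat f$), this curve is the \emph{longitude} of $A$, not its meridian. Filling $M_f$ along it caps off every fiber and yields the mapping torus of $f$ on the $n$-punctured sphere. That is the complement of the closed braid in $S^2\times S^1$, not $S^3\setminus N(\hat f)$. The slope that recovers $S^3\setminus N(\hat f)$ is the meridian $\mu_A$ of the axis. Taking $f$ to be the identity near $\partial D$, on $\partial D\times S^1$ this is $\{pt\}\times S^1$, the curve meeting $\partial D$ exactly once.

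This is not just a labeling issue, because the normalized-length estimate is sensitive to exactly this distinction. The geometry behind Ma's bound works as follows on the cusp torus of the axis:
\begin{itemize}
\item the fiber boundary $\partial D$ stays short, with length bounded in terms of $n$ alone (by the pleated-surface bound, at most $6(n-1)$ on a maximal cusp);
\item the height $h=\mathrm{Area}/\ell(\partial D)$, measured transverse to the fiber, grows with the translation length.
\end{itemize}
Write $\hat L(\cdot)=L(\cdot)/\sqrt{\mathrm{Area}}$ for normalized length. Since $\mu_A$ crosses $\partial D$ once, $L(\mu_A)\ge h$, so $\hat L(\mu_A)\ge \sqrt{h/\ell(\partial D)}\to\infty$. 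By contrast, $\hat L(\partial D\times\{pt\})=\sqrt{\ell(\partial D)/h}\to 0$. So for large $\tau(f)$ the Hodgson--Kerckhoff hypothesis fails for precisely the slope you chose.

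Your heuristic that the fiber ``wraps many times around the cusp'' should be replaced by this picture: the fiber boundary stays short while the cusp becomes long in the transverse direction. Once $\lambda$ is replaced by $\mu_A$, your three steps are exactly the paper's argument. Your remarks on filling a single cusp and on independence from $F$ are fine.
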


\subsection{Genericity of hyperbolic knots}
There is a natural map $\pi$ from $B_n$ to $S_n$, the permutation group on $n$ letters $\Omega = \{1, 2,\ldots,n\}$: let $\sigma_1,\ldots,\sigma_{n-1}$ be the canonical generators of $B_n$--that is,
when viewing $B_n$ as the mapping class group of a disk with $n$ punctures, $\sigma_i$ is the (right) half-twist which permutes the $i$-th and $(i + 1)$-th punctures--and let $s_i$ be the simple transposition on the letters $i$ and $i+ 1$ for $1 \le i \le n-1$; then $\pi(\sigma_i)=s_i$. The number of components of our link $\hat f$ is just the number of orbits of $\pi(f)$ on $\Omega = \{1, 2,\ldots,n\}$, i.e., the number of a presentation of $\pi(f)$ as a product of disjoint cycles. The probability that $\hat f$ is a knot is exactly the probability that $\pi(f)$ is an $n$-cycle. Denote $$\C_n=\{f\in B_n: \pi(f) \text{ is an } n-\text{cycle}\}.$$

To show the positive density of $\C_n$ in $B_n$, we need the following lemma which can be extracted from \cite[Corollary 1]{CW18}.
\begin{lemma}\cite[Lemma 3.12]{CW25}\label{Lem: CW}
    Let $G$ be a group with a finite generating set $F$. Let $E\subset G$ be a subset. Suppose there is a finite subset $B\subset G$ with the following property: For any $g\in G$, there exists $b\in B$ such that $gb\in E$. Then there exists a constant $c=c(F)\in (0,1)$ such that $$\frac{\sharp  (B_F(R)\cap E)}{\sharp B_F(R)}>c$$ for any sufficiently large $R$.
\end{lemma}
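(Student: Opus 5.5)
The plan is a direct counting argument: translate the ball $B_F(R)$ into $E$ by right multiplication by elements of $B$, control the multiplicity of this translation, and compare the sizes of balls of radii $R$ and $R+L$, where $L$ is the maximal length of an element of $B$. No hyperbolicity or growth assumptions on $G$ should be needed, only the submultiplicativity of ball sizes.

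First set $L:=\max_{b\in B}|b|_F$, which is finite because $B$ is finite. For each $g\in G$ choose some $b_g\in B$ with $gb_g\in E$, and define $\Psi: G\to E$ by $\Psi(g)=gb_g$. By the triangle inequality, $|gb_g|_F\le |g|_F+L$, so $\Psi$ maps $B_F(R)$ into $E\cap B_F(R+L)$.

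Next I will bound the fibres of $\Psi$. If $\Psi(g)=h$, then $g=hb_g^{-1}\in hB^{-1}$, so each $h$ has at most $\sharp B$ preimages. Consequently
$$\sharp\big(E\cap B_F(R+L)\big)\ \ge\ \frac{\sharp B_F(R)}{\sharp B}.$$

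The only point requiring care is converting this into a ratio bound at the same radius, since a priori $\sharp B_F(R+L)$ could be much larger than $\sharp B_F(R)$. The key observation is submultiplicativity: every word of length at most $R+L$ splits as a word of length at most $R$ followed by a word of length at most $L$. Hence
$$\sharp B_F(R+L)\le \sharp B_F(R)\cdot \sharp B_F(L).$$
Combining this with the previous bound gives
$$\frac{\sharp(E\cap B_F(R+L))}{\sharp B_F(R+L)}\ \ge\ \frac{1}{\sharp B\cdot\sharp B_F(L)}.$$
Replacing $R$ by $R-L$ shows that this lower bound holds for every radius $R\ge L$.

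Finally, I will take $c:=\frac{1}{2\,\sharp B\cdot \sharp B_F(L)}$. This lies in $(0,1)$ and yields the strict inequality required in the statement for all $R\ge L$. The constant depends on $F$ and on $B$, and the statement's notation $c=c(F)$ is to be read with $B$ and $E$ fixed.
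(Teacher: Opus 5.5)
Your proof is correct: $g\mapsto gb_g$ sends $B_F(R)$ into $E\cap B_F(R+L)$ with fibres of size at most $\sharp B$. The submultiplicativity $\sharp B_F(R+L)\le \sharp B_F(R)\,\sharp B_F(L)$ then turns this into the uniform lower bound $1/(\sharp B\,\sharp B_F(L))$ for all $R\ge L$.

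The paper gives no proof of its own; it only cites \cite[Lemma 3.12]{CW25} (extracted from \cite[Corollary 1]{CW18}). Your counting argument is, to my knowledge, the standard argument behind that reference, and it is self-contained as written.
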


Let $sec: S_n\to B_n$ be a section of the quotient map $\pi$. Denote $B=sec(S_n)$.

\begin{lemma}\label{Lem: Cn}
    For any $g\in B_n$, there exists $b\in B$ such that $gb\in \C_n$.
\end{lemma}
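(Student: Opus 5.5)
The plan is to reduce the statement to a fact about the symmetric group. Since $\pi$ is a homomorphism and $sec$ is a section of $\pi$, every $b\in B$ satisfies $\pi(b)=\sigma$ for the unique $\sigma\in S_n$ with $b=sec(\sigma)$. Consequently
$$\pi(g\,sec(\sigma))=\pi(g)\,\pi(sec(\sigma))=\pi(g)\sigma$$
for every $\sigma\in S_n$. Membership $g\,sec(\sigma)\in\C_n$ depends only on $\pi(g\,sec(\sigma))$. It therefore suffices to prove that for every $\tau\in S_n$ there exists $\sigma\in S_n$ such that $\tau\sigma$ is an $n$-cycle.

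This purely group-theoretic step is immediate. Fix any $n$-cycle $c\in S_n$, for instance $c=(1\,2\,\cdots\,n)=s_1s_2\cdots s_{n-1}$. Given $\tau=\pi(g)$, set $\sigma:=\tau^{-1}c\in S_n$. Then $\tau\sigma=c$ is an $n$-cycle.

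Putting the two steps together, take $b:=sec(\tau^{-1}c)\in B$. Then $\pi(gb)=\pi(g)\,\tau^{-1}c=c$, so $gb\in\C_n$.

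There is no real obstacle here. The only point needing care is that the finite set $B=sec(S_n)$ already contains a lift of \emph{every} permutation, since $sec$ is a section of the surjection $\pi$. This is what lets us hit the specific permutation $\pi(g)^{-1}c$, whatever $g$ is.

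The argument is set up so that the lemma feeds directly into Lemma \ref{Lem: CW}, with $E=\C_n$ and $B$ finite of cardinality $n!$. That combination should yield the positive density of $\C_n$ in $B_n$.
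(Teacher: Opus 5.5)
Your proposal is correct and follows essentially the same approach as the paper: choose $a\in S_n$ with $\pi(g)a$ an $n$-cycle and take $b=sec(a)$. The only difference is that you make the choice explicit via $a=\pi(g)^{-1}c$, a point the paper leaves implicit.
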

\begin{proof}
    Let $a\in S_n$ such that $\pi(g)a$ is an $n$-cycle. Then $b=sec(a)$ satisfies the requirement.
\end{proof}

Applying Lemma \ref{Lem: Cn} to Lemma \ref{Lem: CW} gives

\begin{lemma}\label{Lem: PosDen}
    $\C_n$ has a positive density in $B_n$. That is, for every finite generating set $F$ of $B_n$, there exists a constant $c=c(F)\in (0,1)$ such that
    $$\frac{\sharp (B_F(R)\cap \C_n)}{\sharp B_F(R)}>c$$
    for any sufficiently large $R$.
\end{lemma}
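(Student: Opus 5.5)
The argument is simply to apply Lemma \ref{Lem: CW} with $G=B_n$, $E=\C_n$ and the finite set $B=sec(S_n)$. That lemma requires that for every $g\in B_n$ some $b\in B$ satisfies $gb\in\C_n$, and Lemma \ref{Lem: Cn} provides exactly this. So the proof reduces to checking the hypotheses of Lemma \ref{Lem: CW} and then reading off its conclusion.

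The first step is to observe that $B$ is finite. It is the image of the finite set $S_n$ under the section $sec$, so $\sharp B\le n!$. Any set-theoretic section will do, for instance sending each permutation to a chosen word in the $\sigma_i$ lifting a reduced word in the $s_i$.

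The second step is to verify Lemma \ref{Lem: Cn}. Given $g\in B_n$, put $a=\pi(g)^{-1}c$, where $c=(1\,2\,\cdots\,n)$ is a fixed $n$-cycle. Then $\pi(g)a=c$ is an $n$-cycle. Set $b=sec(a)\in B$. Since $\pi$ is a homomorphism and $\pi\circ sec=\mathrm{id}$, we get $\pi(gb)=\pi(g)\pi(sec(a))=\pi(g)a$, which is an $n$-cycle, so $gb\in\C_n$.

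The third step is to invoke Lemma \ref{Lem: CW} for an arbitrary finite generating set $F$ of $B_n$. This gives a constant $c(F)\in(0,1)$ with $\sharp(B_F(R)\cap\C_n)/\sharp B_F(R)>c(F)$ for all sufficiently large $R$, which is precisely the statement. Note that this lower bound holds for all large $R$, so no existence of the limit defining $\delta$ is needed. The only real content is Lemma \ref{Lem: CW} itself, which is imported from \cite{CW25}, so I expect no genuine obstacle.
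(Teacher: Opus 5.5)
Your proposal is correct and is essentially the paper's argument: Lemma \ref{Lem: Cn} (which you verify explicitly by taking $a=\pi(g)^{-1}$ times a fixed $n$-cycle) supplies the hypothesis of Lemma \ref{Lem: CW} for the finite set $B=sec(S_n)$, and the conclusion then follows directly. One minor point: you use $c$ both for the fixed $n$-cycle and for the density constant $c(F)$, so rename one of them.
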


\begin{remark}
    We remark that $\C_n$ is not generic in $B_n$, i.e. the constant $c$ in Lemma \ref{Lem: PosDen} can not be arbitrarily close to 1. The reason is simple: if one considers the density of the set $\C_{n-1}=\{f\in B_n: \pi(f) \text{ is an } (n-1)-\text{cycle}\}$ in $B_n$, then the same arguments as above show that $\C_{n-1}$ has a positive density in $B_n$. This in turn gives the non-genericity of $\C_n$.
\end{remark}

\begin{lemma}\label{Lem: pA in subsets}
    For any finite generating set $F$ of $B_n$ and any $C>0$, we have
    $$\lim_{R\to \infty}\frac{\sharp\{f\in B_F(R)\cap \C_n: f \text{ is pseudo-Anosov and satisfies } \tau(f)>C\}}{\sharp (B_F(R)\cap\C_n)}= 1.$$
\end{lemma}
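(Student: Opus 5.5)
The plan is to combine Lemma \ref{Lem: pA in groups} with Lemma \ref{Lem: PosDen}. A generic subset of $B_n$ stays generic after we restrict to any subset of positive density. Let $E_C=\{f\in B_n: f \text{ is pseudo-Anosov and } \tau(f)>C\}$ and write $E_C^c$ for its complement. Then
$$\frac{\sharp\{f\in B_F(R)\cap \C_n: f\notin E_C\}}{\sharp (B_F(R)\cap\C_n)}\le \frac{\sharp (B_F(R)\cap E_C^c)}{\sharp B_F(R)}\cdot\frac{\sharp B_F(R)}{\sharp (B_F(R)\cap \C_n)}.$$
By Lemma \ref{Lem: pA in groups}, the first factor tends to $0$ as $R\to\infty$. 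By Lemma \ref{Lem: PosDen}, the second factor is bounded above by $1/c$ for all sufficiently large $R$, where $c=c(F)\in(0,1)$. So the product tends to $0$, and taking complements gives the claimed limit $1$.

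The steps, in order, are as follows. First, fix $F$ and $C$ and define $E_C$. Second, use the inclusion $(B_F(R)\cap\C_n)\setminus E_C\subseteq B_F(R)\setminus E_C$ to bound the numerator. Third, invoke Lemma \ref{Lem: PosDen} for the uniform lower bound $\sharp(B_F(R)\cap\C_n)>c\,\sharp B_F(R)$ when $R$ is large. Fourth, let $R\to\infty$ using Lemma \ref{Lem: pA in groups}.

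There is essentially no obstacle, since the argument is purely measure-theoretic. The only point to check is that the lower bound in Lemma \ref{Lem: PosDen} holds for the same generating set $F$ and for all large $R$, not just along a subsequence. This is exactly what the lemma states, because its constant depends only on $F$.
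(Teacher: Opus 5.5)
Your proof is correct and uses the same two ingredients as the paper: Lemma~\ref{Lem: pA in groups}, Lemma~\ref{Lem: PosDen}, and the fact that non-pseudo-Anosov (or small-$\tau$) braids in $\C_n$ are among those in all of $B_n$. The paper argues by contradiction via the constant $c_3=1-c_2+c_1c_2$, whereas your direct bound (the failure proportion in $\C_n$ is at most $1/c$ times the failure proportion in $B_n$) is cleaner, and it also avoids the paper's slightly loose negation of the limit, which should only yield the ratio $\le c_1$ along a subsequence of $R$, not for all large $R$.
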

\begin{proof}
    Suppose to the contrary that there exist a finite generating set $F$ of $B_n$ and two constants $C>0, c_1\in (0,1)$ such that $$\frac{\sharp\{f\in B_F(R)\cap \C_n: f \text{ is pseudo-Anosov and satisfies } \tau(f)>C\}}{\sharp (B_F(R)\cap\C_n)}\le c_1$$ for any $R\gg 0$. Let $c_2=c_2(F)$ be given by Lemma \ref{Lem: PosDen}. Denote $c_3=1-c_2+c_1c_2\in (0,1)$.

    By Lemma \ref{Lem: pA in groups}, for any sufficiently large $R$, one has 
    \begin{equation}\label{Equ: c3}
        \frac{\sharp\{f\in B_F(R): f \text{ is pseudo-Anosov and satisfies } \tau(f)>C\}}{\sharp B_F(R)}>c_3.
    \end{equation}
    
    Fix a sufficiently large $R$ so that all the above inequalities hold. Then,
    \begin{align*}
        \frac{\sharp\{f\in B_F(R): f \text{ is } \ldots\}}{\sharp B_F(R)}&=\frac{\sharp\{f\in B_F(R)\cap \C_n: f \text{ is }\ldots\}+\sharp\{f\in B_F(R)\setminus \C_n: f \text{ is }\ldots\}}{\sharp (B_F(R)\cap \C_n)+\sharp (B_F(R)\setminus \C_n)}\\
        &\le \frac{\sharp\{f\in B_F(R)\cap \C_n: f \text{ is }\ldots\}+\sharp (B_F(R)\setminus \C_n)}{\sharp (B_F(R)\cap \C_n)+\sharp (B_F(R)\setminus \C_n)}\\
        &\le \frac{c_1\sharp(B_F(R)\cap \C_n)+\sharp (B_F(R)\setminus \C_n)}{\sharp (B_F(R)\cap \C_n)+\sharp (B_F(R)\setminus \C_n)}\\
        &=\frac{c_1\sharp B_F(R)+(1-c_1)\sharp (B_F(R)\setminus \C_n)}{\sharp B_F(R)}\\
        &\le c_1+(1-c_1)(1-c_2)=c_3.
    \end{align*}
    This is a contradiction to Formula (\ref{Equ: c3}). Then the conclusion follows.
\end{proof}

As a consequence of Lemma \ref{Lem: pA in subsets}, we have
\begin{theorem}\label{Thm: HypKnot}
    A generic braid in $\C_n$ gives a hyperbolic knot, i.e. 
    $$\lim_{R\to \infty}\frac{\sharp\{f\in B_F(R)\cap \C_n: \hat f \text{ is a hyperbolic knot}\}}{\sharp (B_F(R)\cap \C_n)}=1$$
    for any finite generating set $F$ of $B_n$.
\end{theorem}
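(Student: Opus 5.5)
The plan is to reduce Theorem \ref{Thm: HypKnot} to Lemma \ref{Lem: pA in subsets}, using the same geometric argument that gave Theorem \ref{Thm: HypLink}. First, note that for $f\in\C_n$ the closure $\hat f$ is a knot, since $\pi(f)$ is an $n$-cycle and the components of $\hat f$ are in bijection with the orbits of $\pi(f)$. So it suffices to show that
\[
\lim_{R\to\infty}\frac{\sharp\{f\in B_F(R)\cap\C_n: S^3\setminus N(\hat f)\text{ is hyperbolic}\}}{\sharp(B_F(R)\cap\C_n)}=1 .
\]

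Next, fix the sufficiently large constant $C>0$ used before Theorem \ref{Thm: HypLink}. This $C$ depends only on $n$, through the normalized-length threshold of \cite[Theorem 5.11]{HK08} and the comparison formula (2.3) in \cite{Ma14}. For any pseudo-Anosov $f\in B_n$ with $\tau(f)>C$, the argument preceding Theorem \ref{Thm: HypLink} applies verbatim:
\begin{enumerate}
\item the mapping torus $M_f$ is hyperbolic by the hyperbolization theorem for fibered 3-manifolds;
\item $S^3\setminus N(\hat f)$ is the Dehn filling of $M_f$ along the single cusp coming from $\partial D$, with filling slope $\lambda$;
\item by \cite[(2.3)]{Ma14}, the normalized length of $\lambda$ exceeds the threshold in \cite[Theorem 5.11]{HK08};
\item hence the filled manifold is hyperbolic.
\end{enumerate}
This argument never uses the number of components of $\hat f$, so it holds in particular for every $f\in\C_n$. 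This gives the containment
\[
\{f\in B_F(R)\cap \C_n: f\text{ is pseudo-Anosov},\ \tau(f)>C\}\subseteq\{f\in B_F(R)\cap\C_n:\hat f\text{ is a hyperbolic knot}\}.
\]

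Finally, divide both sides by $\sharp(B_F(R)\cap\C_n)$. This quantity is positive for large $R$ by Lemma \ref{Lem: PosDen}. By Lemma \ref{Lem: pA in subsets}, the ratio on the left tends to $1$ as $R\to\infty$, and the ratio on the right is at most $1$, so it also tends to $1$.

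The main obstacle is ensuring that the threshold $C$ is uniform, i.e.\ independent of $f$. One has to check that the inequality in \cite{Ma14} bounds the normalized length of $\lambda$ from below by a function of $\tau(f)$ (and $n$) alone, tending to infinity with $\tau(f)$. Once that is confirmed, choosing $C$ with this lower bound above the Hodgson--Kerckhoff constant settles the matter, and Lemma \ref{Lem: pA in subsets} holds for every $C>0$, in particular for this one.
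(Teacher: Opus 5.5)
Your proposal is correct and matches the paper's proof. The paper deduces Theorem \ref{Thm: HypKnot} directly from Lemma \ref{Lem: pA in subsets}, combined with the argument before Theorem \ref{Thm: HypLink} that a pseudo-Anosov braid with $\tau(f)>C$ has hyperbolic closure, and your containment-plus-ratio argument is exactly that deduction written out. The uniformity of $C$ that you flag is the same implicit assumption the paper makes when it says ``fix a sufficiently large $C>0$'', so it is not a new gap in your version.
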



\subsection{Genericity of hyperbolic 3-manifolds}

Let $L\subset S^3$ be a hyperbolic link with $k$ components. A fundamental theorem of Thurston states that, for all but a finite number of Dehn surgeries on $L$, the resulting closed 3-manifold has a hyperbolic structure. Those that don’t result in a hyperbolic structure are called \textit{exceptional} curves. In \cite{HK05}, Hodgson-Kerckhoff showed that there is a universal bound to the number of curves that must be excluded from each boundary component; in particular, when $k=1$, there are at most 60 exceptional curves and when $k>1$, at most 114 curves from each component need to be excluded. 

For each pair of coprime integers $(p,q)$, there exists a pair of coprime integers $(r,s)$ such that $rq-sp=1$. If there exists another pair of integers $(r',s')$ satisfying that $r'q-s'p=1$, then the two pairs differ by an integer multiple of $(p,q)$.

Fix a pair of coprime integers $(p,q)$. Let $(r,s)$ be a pair of integers such that $rq-sp=1$. 

Let $F$ be the standard generating set of $\sl(2,\Z)$ given by Fact \ref{Fac2} (1). 

\begin{lemma}\label{Lem: NormEstimate2}
    For any $m\in \Z$, we have $$|m|-|\begin{pmatrix}
    r & s\\ p & q
\end{pmatrix}|_F\le |\begin{pmatrix}
    r+mp & s+mq\\ p & q
\end{pmatrix}|_F\le |\begin{pmatrix}
    r & s\\ p & q
\end{pmatrix}|_F+|m|.$$
\end{lemma}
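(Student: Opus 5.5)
The plan is to interpret the matrix in the middle as a product $T^mA$, where
$$A=\begin{pmatrix} r & s\\ p & q\end{pmatrix},\qquad T=\begin{pmatrix} 1 & 1\\ 0 & 1\end{pmatrix}\in F,$$
and then read both inequalities off the triangle inequality for $|\cdot|_F$. A direct multiplication gives $T^mA=\begin{pmatrix} r+mp & s+mq\\ p & q\end{pmatrix}$. Left multiplication by $T^m$ is exactly the ambiguity $(r,s)\mapsto(r+mp,s+mq)$ in the choice of $(r,s)$ noted just before the lemma. Since $|\cdot|_F$ is a word norm, it is subadditive and symmetric under inversion. This gives
$$|T^mA|_F\le |T^m|_F+|A|_F,\qquad |T^mA|_F\ge |T^m|_F-|A^{-1}|_F=|T^m|_F-|A|_F .$$
So the lemma reduces to the single identity $|T^m|_F=|m|$.

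The upper bound is immediate. $T$ is one of the two generators in Fact \ref{Fac2} (1), so $T^m$ is spelled by a word of length $|m|$, and $|T^m|_F\le |m|$. This already proves the right-hand inequality of the lemma.

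The lower bound $|T^m|_F\ge |m|$ is the main obstacle. It asserts that the parabolic cyclic subgroup $\langle T\rangle$ is not merely undistorted but isometrically embedded for this generating set. No homomorphism $\sl(2,\Z)\to\Z$ is available, since the abelianization is $\Z/12$. My first attempt is a potential-function argument on the Farey tessellation. $\sl(2,\Z)$ acts on the Farey graph, and in that graph the generators $T^{\pm1}$ and $L^{\pm1}$, with $L=\begin{pmatrix} 1&0\\1&1\end{pmatrix}$, each fix one cusp ($\infty$, respectively $0$) and rotate the Farey edges around it by one step. Following the edge $\{0,\infty\}$ under a word $w=x_1\cdots x_n$ gives a path of Farey edges, with consecutive edges sharing a vertex. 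Each letter then changes, by at most one, the index of the edge in the fan of edges at $\infty$, which are the edges $\{k,\infty\}$ for $k\in\Z$. The image $T^m\{0,\infty\}=\{m,\infty\}$ has index $m$. A letter that moves the edge off the fan at $\infty$ contributes nothing to this index, and a return to the fan re-enters at a neighbouring integer. A path from $\{0,\infty\}$ to $\{m,\infty\}$ in which every step changes the fan index by at most one must therefore take at least $|m|$ steps.

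The delicate point is making the "at most one per letter" bookkeeping precise when the path leaves the fan at $\infty$, pivots around other cusps, and returns. Here I would use the tree structure of the Farey graph, via its dual tree, which is the Bass--Serre tree of $\Z/4*_{\Z/2}\Z/6$: an excursion away from the fan must come back through the same triangle it left by. It also helps that $T$ and $L$ each act as unit moves on this tree. If this exact count cannot be closed up, the fallback is the method of Lemma \ref{Lem: NormEstimate}. There one passes to the free subgroup $G$ of Fact \ref{Fac2} (2), where $T^{2k}$ has $S$-length exactly $|k|$, and the quasi-isometry yields $|T^m|_F\ge |m|/a'-a'$. That gives the lemma up to multiplicative and additive constants, which is all the subsequent counting argument needs.
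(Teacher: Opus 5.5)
Your proposal follows the paper's proof: write the middle matrix as $T^mA$ and apply the triangle inequality. The one difference is that the paper uses $|T^m|_F=|m|$ without comment, so you are right that the lower bound $|T^m|_F\ge|m|$ is the only real content. Your coarse fallback would not give the lemma as stated, only what Lemma \ref{Lem: Neg2} needs.

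Your dual-tree idea settles the lower bound exactly. Each of $T^{\pm1},L^{\pm1}$ carries $e_0=\{0,\infty\}$ to another side of one of the two Farey triangles containing $e_0$. Hence $g\mapsto ge_0$ is $2$-Lipschitz from the word metric to the barycentric dual tree. The fan $\dots,\{k,\infty\},\{k,k+1,\infty\},\{k+1,\infty\},\dots$ is a non-backtracking, hence geodesic, line in that tree, so $d(e_0,T^me_0)=2|m|$, which forces $|T^m|_F\ge|m|$.

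An equivalent route is to count syllables in $\mathrm{PSL}(2,\mathbb{Z})\cong\mathbb{Z}/2*\mathbb{Z}/3$. Let $a,b$ be the images of $\left(\begin{smallmatrix}0&-1\\1&0\end{smallmatrix}\right)$ and $\left(\begin{smallmatrix}0&-1\\1&1\end{smallmatrix}\right)$. Then $T=ab$ and $L^{-1}=ba$, so each generator has syllable length $2$, while $T^m=(ab)^m$ has syllable length $2|m|$.
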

\begin{proof}
    Note that $$\begin{pmatrix}
    r+mp & s+mq\\ p & q
\end{pmatrix}=\begin{pmatrix}
    1 & m\\ 0 & 1
\end{pmatrix}\begin{pmatrix}
    r & s\\ p & q
\end{pmatrix}=\begin{pmatrix}
    1 & 1\\ 0 & 1
\end{pmatrix}^m\begin{pmatrix}
    r & s\\ p & q
\end{pmatrix}. $$

By the triangle inequality of $|\cdot|_F$, one has
$$\left||\begin{pmatrix}
    r+mp & s+mq\\ p & q
\end{pmatrix}|_F-|\begin{pmatrix}
    1 & 1\\ 0 & 1
\end{pmatrix}^m|_F\right|=\left||\begin{pmatrix}
    r+mp & s+mq\\ p & q
\end{pmatrix}|_F-|m|\right|\le|\begin{pmatrix}
    r & s\\ p & q
\end{pmatrix}|_F.$$
\end{proof}

Denote $$\E_{p,q}=\left\{\begin{pmatrix}
    r+mp & s+mq\\ p & q
\end{pmatrix}:m\in \Z\right\}\subset \sl(2,\Z).$$

\begin{lemma}\label{Lem: Neg2}
    $\E_{p,q}$ is negligible on $\sl(2,\Z)$, i.e. 
    \begin{equation}\label{Equ2}
        \lim_{R\to \infty}\frac{\sharp(B_{F'}(R)\cap \E_{p,q})}{\sharp B_{F'}(R)}= 0
    \end{equation}
    for any finite generating set $F'$ of $\sl(2,\Z)$.
\end{lemma}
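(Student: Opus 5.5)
The plan copies the proof of Lemma \ref{Lem: Neg}: bound $\sharp(B_{F'}(R)\cap \E_{p,q})$ by a linear function of $R$, and bound $\sharp B_{F'}(R)$ below by an exponential in $R$.

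\emph{Step 1: the standard generating set $F$.} Set $A_0=\begin{pmatrix} r & s\\ p & q\end{pmatrix}$ and $c:=|A_0|_F$. This is a fixed constant, since $(p,q)$ and $(r,s)$ are fixed. A general element of $\E_{p,q}$ is $A_m=\begin{pmatrix} r+mp & s+mq\\ p & q\end{pmatrix}$. These matrices are pairwise distinct: $(p,q)\neq(0,0)$ because $p$ and $q$ are coprime, so $A_m=A_{m'}$ forces $m=m'$. Hence counting elements of $B_F(R)\cap\E_{p,q}$ is the same as counting the integers $m$ with $|A_m|_F\le R$. By the left inequality of Lemma \ref{Lem: NormEstimate2}, $|A_m|_F\le R$ implies $|m|\le R+c$. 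Therefore
\[
\sharp(B_F(R)\cap \E_{p,q})\le 2(R+c)+1 .
\]
The proof of Lemma \ref{Lem: Neg} already gives $\sharp B_F(R)\ge 3^{\lfloor R/2\rfloor}$, so the ratio is at most $(2R+2c+1)/3^{\lfloor R/2\rfloor}$, which tends to $0$.

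\emph{Step 2: an arbitrary finite generating set $F'$.} Let $k:=\max_{A\in F'}|A|_F$. Then $B_{F'}(R)\subseteq B_F(kR)$, so
\[
\sharp(B_{F'}(R)\cap\E_{p,q})\le 2(kR+c)+1 .
\]
For the denominator, use the Shalen--Wagreich bound quoted in Lemma \ref{Lem: Neg}, which gives $\sharp B_{F'}(R)\ge 3^{\lfloor R/23\rfloor}$. The ratio is then bounded by a linear function of $R$ divided by an exponential in $R$, so it tends to $0$.

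I expect no real obstacle here; the argument is strictly easier than Lemma \ref{Lem: Neg}, because $\E_{p,q}$ is a one-parameter family and its count grows only linearly. The points that need care are the following. The constant $c$ depends on $(p,q)$ and $(r,s)$ but not on $R$, which is all the argument requires. The set $\E_{p,q}$ does not depend on the choice of $(r,s)$, by the remark preceding Lemma \ref{Lem: NormEstimate2}.
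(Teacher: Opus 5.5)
Your proposal is correct and follows the paper's proof essentially step for step. You get the linear bound $\sharp(B_F(R)\cap\E_{p,q})\le 2R+2c+1$ from the left inequality of Lemma \ref{Lem: NormEstimate2}, use the exponential growth of $\sharp B_F(R)$, and pass to an arbitrary $F'$ via $B_{F'}(R)\subseteq B_F(kR)$ and the Shalen--Wagreich lower bound, exactly as the paper does by deferring to Lemma \ref{Lem: Neg}. (Your check that the $A_m$ are pairwise distinct is harmless but not needed: $m\mapsto A_m$ maps onto $\E_{p,q}$, and that already gives the upper bound.)
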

\begin{proof}
    We follow the same proof line of Lemma \ref{Lem: Neg}. First, we show that Formula (\ref{Equ2}) holds for the standard generating set $F$. Denote $k=|\begin{pmatrix}
    r & s\\ p & q
\end{pmatrix}|_F$. By Lemma \ref{Lem: NormEstimate2}, for a matrix $A=\begin{pmatrix}
    r+mp & s+mq\\ p & q
\end{pmatrix}\in B_F(R)\cap \E_{p,q}$, there are at most $2(R+k)+1$ choices for $m$. Hence, $\sharp(B_F(R)\cap \E_{p,q})\le 2R+2k+1$. Since $\sharp B_F(R)$ grows exponentially with respect to $R$, we obtain Formula (\ref{Equ2}) for the standard generating set $F$. The remaining proofs are completed by the same arguments in the proof of Lemma \ref{Lem: Neg}.
\end{proof}

Let $L\subset S^3$ be a hyperbolic link with $k$ components. To obtain a closed hyperbolic 3-manifold via Dehn surgeries on $L$, Hodgson-Kerckhoff \cite{HK05} showed that there are at most 114 exceptional curves from each component. Let $M=S^3\setminus N(L)$ be the complement of $L$ in $S^3$ with $k$ torus boundaries $T_1,\cdots,T_k$. Let $A_i=\{(p_{i1},q_{i1}),\cdots,(p_{i114},q_{i114})\}$ be the set of exceptional curves from $T_i$. Let $A_L=A_1\cup \cdots A_k$. Since $A_L$ is a finite subset, it follows from Lemma \ref{Lem: Neg2} that
\begin{lemma}\label{Lem: UniNeg}
    $\bigcup_{(p,q)\in A_L}\E_{p,q}$ is negligible on $\sl(2,\Z)$.
\end{lemma}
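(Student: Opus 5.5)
The plan is to reduce the statement to Lemma \ref{Lem: Neg2} via finite subadditivity of upper density. Fix a finite generating set $F'$ of $\sl(2,\Z)$. Since $A_L$ is a finite set of pairs of coprime integers (at most $114k$ of them), for each $(p,q)\in A_L$ we fix once and for all a pair $(r,s)$ with $rq-sp=1$, which defines $\E_{p,q}$. One remark is needed here: the set $\E_{p,q}$ does not depend on this choice. Indeed, any other solution $(r',s')$ differs from $(r,s)$ by an integer multiple of $(p,q)$, as observed just before Lemma \ref{Lem: NormEstimate2}. So $\E_{p,q}$ is exactly the set of matrices in $\sl(2,\Z)$ with bottom row $(p,q)$.

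For each $R>0$ we have the elementary union bound
\begin{equation*}
\frac{\sharp\bigl(B_{F'}(R)\cap \bigcup_{(p,q)\in A_L}\E_{p,q}\bigr)}{\sharp B_{F'}(R)}\le \sum_{(p,q)\in A_L}\frac{\sharp(B_{F'}(R)\cap \E_{p,q})}{\sharp B_{F'}(R)}.
\end{equation*}
By Lemma \ref{Lem: Neg2}, each summand on the right tends to $0$ as $R\to\infty$. The sum has a fixed finite number of terms, independent of $R$, so the right-hand side tends to $0$. The left-hand side is nonnegative, so its limit exists and equals $0$, which is exactly the claimed negligibility.

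There is no real obstacle. The only points to check are that $A_L$ is finite, which follows from the Hodgson--Kerckhoff bound applied to each of the $k$ cusps, and that the choice of $(r,s)$ in the definition of $\E_{p,q}$ is harmless, which is the remark in the first paragraph.
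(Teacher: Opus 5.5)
Your proof is correct and is the same as the paper's: the paper simply notes that $A_L$ is finite and invokes Lemma \ref{Lem: Neg2}, and your finite union bound just makes that step explicit. Your remark that $\E_{p,q}$ does not depend on the choice of $(r,s)$, being exactly the set of matrices in $\sl(2,\Z)$ with bottom row $(p,q)$, is correct and is a worthwhile clarification that the paper leaves implicit.
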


Hence, we obtain that
\begin{theorem}\label{Thm: DehnSurImpHyp3mfd}
    Let $L\subset S^3$ be a hyperbolic link with $k$ components. A generic Dehn surgery on $L$ gives a closed hyperbolic 3-manifold, i.e. $$\lim_{R\to \infty}\frac{\sharp\{\phi_1,\cdots,\phi_k\in B_{F'}(R): \xi(\phi_1),\cdots,\xi(\phi_k)\text{ are non-exceptional slopes on }L \}}{\sharp B_{F'}(R)^k}= 1$$ for any finite generating set $F'$ of $\sl(2,\Z)$. 
\end{theorem}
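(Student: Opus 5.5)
The plan is to turn the count in the statement into a complement estimate. By definition, a $k$-tuple $(\phi_1,\dots,\phi_k)\in B_{F'}(R)^k$ fails to be counted only if some coordinate slope $\xi(\phi_i)$ is exceptional on the $i$-th component $T_i$. By Hodgson--Kerckhoff, the exceptional slopes on $T_i$ form the finite set $A_i$. We treat slopes as unoriented: $p/q$ and $(-p)/(-q)$ give the same filling. So we enlarge $A_L$ to include $(-p,-q)$ for each $(p,q)\in A_L$; the set stays finite.

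The first step is a fibre description of $\xi$. We claim
\[
\xi^{-1}(p/q)=\E_{p,q}\cup\E_{-p,-q}.
\]
If $\phi\in\sl(2,\Z)$ has bottom row $(p',q')$ with $p'/q'=p/q$ and both pairs coprime, then $(p',q')=\pm(p,q)$. Once the bottom row is fixed, the top row $(r',s')$ is determined up to adding integer multiples of the bottom row, as noted just before Lemma \ref{Lem: NormEstimate2}. Hence $\phi\in\E_{\pm p,\pm q}$. It follows that
\[
\{\phi:\xi(\phi)\text{ is exceptional on some }T_i\}\subseteq \mathcal{N}:=\bigcup_{(p,q)\in A_L}\E_{p,q},
\]
and $\mathcal{N}$ is negligible by Lemma \ref{Lem: UniNeg}.

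The second step is a union bound. The bad tuples lie in
\[
\bigcup_{i=1}^k\{(\phi_1,\dots,\phi_k):\phi_i\in\mathcal{N}\}.
\]
Intersected with $B_{F'}(R)^k$, the $i$-th set has cardinality $\sharp(B_{F'}(R)\cap\mathcal{N})\cdot\sharp B_{F'}(R)^{k-1}$. Dividing by $\sharp B_{F'}(R)^k$, the proportion of bad tuples is at most
\[
k\,\frac{\sharp(B_{F'}(R)\cap\mathcal{N})}{\sharp B_{F'}(R)}.
\]
This tends to $0$ as $R\to\infty$ by Lemma \ref{Lem: UniNeg}, which holds for an arbitrary finite generating set $F'$. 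Therefore the ratio in the statement tends to $1$.

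The main obstacle is the bookkeeping in the first step. We must check that the negligibility proved for each $\E_{p,q}$ in Lemma \ref{Lem: Neg2} really covers the whole preimage under $\xi$, including the sign ambiguity and the choice of $(r,s)$. Each such $\E$ is a single coset-like orbit $\{U^m\phi_0\}$ with $U=\begin{pmatrix}1&1\\0&1\end{pmatrix}$, so finitely many of them suffice.

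Finally, the Hodgson--Kerckhoff bound is a statement about excluding finitely many slopes on each cusp with the remaining fillings hyperbolic. We read "non-exceptional" in the statement as this condition, which is how $A_L$ was defined.
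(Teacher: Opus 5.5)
Your proposal is correct and follows the paper's argument: the $\phi$ with $\xi(\phi)$ exceptional lie in a finite union of the sets $\E_{p,q}$, which is negligible by Lemma \ref{Lem: Neg2} (Lemma \ref{Lem: UniNeg}), and this gives the limit. You also make explicit two points the paper's one-line proof leaves implicit: the sign ambiguity $\xi^{-1}(p/q)=\E_{p,q}\cup\E_{-p,-q}$, and the union bound over the $k$ coordinates.
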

\begin{proof}
    If $\xi(\phi)$ is an exceptional slope on $L$ for some $\phi\in \sl(2,\Z)$, then $\phi\in \bigcup_{(p,q)\in A_L}\E_{p,q}$. Then the conclusion follows from Lemma \ref{Lem: UniNeg}.
\end{proof}

By combining Theorems \ref{Thm: LW}, \ref{Thm: HypLink}, \ref{Thm: DehnSurImpHyp3mfd} and Alexander's Theorem \cite{Ale23}, we have
\begin{theorem}\label{Thm: GenHyp3Mfd}
    A generic closed orientable 3-manifold is hyperbolic: for any finite generating set $F_n$ of $B_n$ and any finite generating set $F$ of $\sl(2,\Z)$, there exist two unbounded and monotonically increasing functions $\alpha, \beta: \N_3\to \N_3$ such that 
    $$\lim_{n\to \infty}\frac{1}{\sharp B_{F_n}(\alpha(n))}\sum_{k=1}^n\frac{\splitfrac{\sharp\{f\in B_{F_n}(\alpha(n)),\phi_1,\cdots,\phi_k\in B_{F}(\beta(n)):\hat f \text{ is a hyperbolic link with }}{k \text{ components, }\xi(\phi_1),\cdots,\xi(\phi_k)\text{ are non-exceptional slopes on }\hat f \}}}{\sharp B_{F}(\beta(n))^k}= 1.$$
\end{theorem}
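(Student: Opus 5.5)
We prove Theorem \ref{Thm: GenHyp3Mfd} by a diagonal argument, following the proof of Theorem \ref{Thm: GenQHomSph} but with two error terms: one from the braid side and one from the $\sl(2,\Z)$ side. Fix $F_n$ and $F$. For each $n$ and each $f\in B_n$, let $k(f)\le n$ be the number of components of $\hat f$. Write $P_n(R,R')$ for the normalized quantity in the statement with $\alpha(n),\beta(n)$ replaced by $R,R'$. This is an average over $f\in B_{F_n}(R)$ of the proportion of tuples $(\phi_1,\dots,\phi_{k(f)})\in B_F(R')^{k(f)}$ satisfying the condition. The sum over $k$ simply sorts the $f$'s by component count, so each $f$ is counted exactly once, as in the proof of Corollary \ref{Cor: GenQHomSph}.

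\textbf{Step 1 (choose $\alpha$).} By Theorem \ref{Thm: HypLink} applied in $B_n$ with $F_n$, there is $R_n$ such that for all $R\ge R_n$ the proportion of $f\in B_{F_n}(R)$ with $\hat f$ hyperbolic exceeds $1-\frac{1}{2n}$. Set $\alpha(n)\ge\max\{R_n,\alpha(n-1),n\}$; this makes $\alpha$ monotone and unbounded.

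\textbf{Step 2 (choose $\beta$).} This is the main point. The ball $B_{F_n}(\alpha(n))$ is finite, so it contains only finitely many hyperbolic links $\hat f$. For each such $f$, Theorem \ref{Thm: DehnSurImpHyp3mfd} with $k=k(f)$ gives $R'_f$ such that for all $R'\ge R'_f$ the proportion of non-exceptional tuples in $B_F(R')^{k(f)}$ exceeds $1-\frac{1}{2n}$. Set $\beta(n)\ge\max\{\max_f R'_f,\beta(n-1),n\}$, which is again a finite maximum.

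The subtlety, and the expected obstacle, is that Theorem \ref{Thm: DehnSurImpHyp3mfd} gives no uniformity over the link $L$. Its rate depends on the exceptional sets $A_L$ through the norms $|\phi_{(p,q)}|_F$ appearing in Lemma \ref{Lem: Neg2}. Choosing $\alpha$ first and then $\beta$ depending on $\alpha(n)$ avoids this issue, because only finitely many links need to be handled at stage $n$. This is also why the statement only asserts the existence of $\alpha,\beta$, rather than a limit in independent radii.

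\textbf{Step 3 (estimate).} Each summand of the average is at most $1$. Each $f$ with $\hat f$ non-hyperbolic contributes $0$ (the set condition fails), and each $f$ with $\hat f$ hyperbolic contributes more than $1-\frac{1}{2n}$. Hence
\[
1\ \ge\ P_n(\alpha(n),\beta(n))\ >\ \Bigl(1-\tfrac{1}{2n}\Bigr)^2\ \ge\ 1-\tfrac1n,
\]
and letting $n\to\infty$ proves the limit equals $1$.

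Finally, Theorem \ref{Thm: LW} and Alexander's theorem \cite{Ale23} justify reading this as a statement about closed 3-manifolds: every closed orientable 3-manifold arises from some braid closure together with integral slopes, and those slopes are realized by elements of $\sl(2,\Z)$ via $\xi$. By Hodgson--Kerckhoff \cite{HK05}, non-exceptional slopes on a hyperbolic $\hat f$ yield hyperbolic manifolds.
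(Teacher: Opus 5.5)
Your proposal is correct and matches the paper's proof. Both choose $\alpha(n)$ via Theorem \ref{Thm: HypLink}, then use finiteness of $B_{F_n}(\alpha(n))$ to pick $\beta(n)$ uniformly over its finitely many hyperbolic closures via Theorem \ref{Thm: DehnSurImpHyp3mfd}, and bound the average below by the product of the two error terms. The only differences are cosmetic: you use $\frac{1}{2n}$ instead of $\frac{1}{n}$, and you state explicitly that the thresholds hold for all larger radii, which is what justifies imposing monotonicity afterwards.
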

\begin{proof}
    For every $n\ge 3$, fix a finite generating set $F_n$ of $B_n$ and a finite generating set $F$ of $\sl(2,\Z)$. Theorems \ref{Thm: HypLink} shows that there exists $\alpha(n)\in \N$ such that 
    \begin{align*}
        &\frac{\sharp\{f\in B_{F_n}(\alpha(n)): \hat f \text{ is a hyperbolic link}\}}{\sharp B_{F_n}(\alpha(n))}\\ &=\sum_{k=1}^n\frac{\sharp\{f\in B_{F_n}(\alpha(n)): \hat f \text{ is a hyperbolic link with }k \text{ components}\}}{\sharp B_{F_n}(\alpha(n))}>1-\frac{1}{n}.
    \end{align*}

    Since $B_{F_n}(\alpha(n))$ is a finite set, Theorems \ref{Thm: DehnSurImpHyp3mfd} shows that there exists $\beta(n)\in \N$ such that 
    $$\frac{\sharp\{\phi_1,\cdots,\phi_k\in B_{F}(\beta(n)): \xi(\phi_1),\cdots,\xi(\phi_k)\text{ are non-exceptional slopes on }\hat f \}}{\sharp B_{F}(\beta(n))^k}> 1-\frac{1}{n}$$ for every hyperbolic link $\hat f$ with $k$ components obtained from the closure of $f\in B_{F_n}(\alpha(n))$.
    
    Therefore, one has that $$\frac{1}{\sharp B_{F_n}(\alpha(n))}\sum_{k=1}^n\frac{\splitfrac{\sharp\{f\in B_{F_n}(\alpha(n)),\phi_1,\cdots,\phi_k\in B_{F}(\beta(n)):\hat f \text{ is a hyperbolic link with }}{k \text{ components, }\xi(\phi_1),\cdots,\xi(\phi_k)\text{ are non-exceptional slopes on }\hat f \}}}{\sharp B_{F}(\beta(n))^k}>(1-\frac{1}{n})^2.$$
    By letting $n\to \infty$ and requiring $\alpha(n)\ge \max\{\alpha(n-1),n\}, \beta(n)\ge \max\{\beta(n-1),n\}$, we complete the proof.
\end{proof}

\begin{remark}
    Although a generic closed orientable 3-manifold is hyperbolic (see Theorem \ref{Thm: GenHyp3Mfd} or \cite[Theorem 1.10]{HYZ25}) and has non-vanishing first $\Z$-homology group ((see Theorem \ref{Thm: GenNonZHomSph} or \cite[Corollary 8.5]{DT06})), Lubotzki, Maher and Wu \cite{LMW16} constructed an infinite family of closed 3-manifolds which are hyperbolic $\Z$-homology spheres. 
\end{remark}

\bibliographystyle{amsplain}   
\bibliography{Reference}
\end{document}